\def\mfrak{\mathfrak}
\title[]{triple symbols in arithmetic}
\author[D.K.]{Dohyeong Kim}
\author[M.M.]{Masanori Morishita}
\address[D.K.]{Department of Mathematical Sciences and Research Institute of Mathematics, Seoul National University, Gwanak-ro 1, Gwankak-gu, Seoul, South Korea 08826}
\email[D.K.]{dohyeongkim@snu.ac.kr}
\address[M.M.]{Faculty of
Mathematics, Kyushu University, 744, Motooka, Nishi-ku, Fukuoka
819-0395, Japan}
\email[M.M.]{morishita.masanori.259@m.kyushu-u.ac.jp}
\date{Oct 2024} 
\def\Q{\mathbb{Q}}
\def\Z{\mathbb{Z}}
\def\Gal{\operatorname{Gal}}
\def\loc{\operatorname{loc}}
\def\ur{\operatorname{ur}}
\def\inv{\operatorname{inv}}
\def\mod{\operatorname{mod}}
\theoremstyle{theorem}
\newtheorem{theorem}{Theorem}
\newtheorem{lemma}{Lemma}
\newtheorem{proposition}{Proposition}
\newtheorem{corollary}{Corollary}
\theoremstyle{definition}
\newtheorem{definition}{Definition}
\theoremstyle{remark}
\newtheorem{remark}{Remark}
\begin{document}
\begin{abstract}
Triple symbols are arithmetic analogues of the mod $n$ triple linking number in topology, where $n > 1$ is an integer. 
In this paper, we introduce a cohomological formulation of a mod $n$ triple symbol for characters over a number field containing a primitive $n$-th root of unity.
Our definition is motivated by the arithmetic Chern--Simons theory and in this respect it differs from earlier approaches to triple symbols.
We show that our symbol agrees with that of R\'edei when $n=2$ and of Amano--Mizusawa--Morishita when $n=3$.

\end{abstract}
\maketitle

\section{Introduction}
The purpose of this paper is to introduce triple symbols $[ \chi_1, \chi_2, \chi_3]_n$ cohomologically, which takes the values in the group $\mu_n$ of $n$-th roots of unity $(n > 1)$, for certain mod $n$ Kummer characters $\chi_i$'s over a number field $F$ containing $\mu_n$ and to show that our construction provides a new description of known cases of triple symbols for $F = \mathbb{Q}$ and $n = 2$ \cite{Redei1939} and for $F = \mathbb{Q}(\sqrt{-3})$ and $n = 3$ \cite{Amano2018}.
 
The study of triple symbols in arithmetic goes back to the work of R\'{e}dei in 1939 \cite{Redei1939}, which intended to generalize the Legendre symbol and Gauss' genus theory
for quadratic fields. For certain prime numbers $p_1, p_2$ and $p_3$, R\'{e}dei's triple symbol $[p_1, p_2, p_3] \in \{ \pm 1\}$ describes the decomposition law in the dihedral extension $K_{p_1, p_2}$ of degree 8, determined by $p_1$ and $p_2$. Some variants of the R\'{e}dei symbol were also studied in \cite{Froehlich1960, Furuta1980, Suzuki1988} among others. In the late 1990s, the second author interpreted the R\'{e}dei symbol as an arithmetic analogue of the mod 2 triple linking number (Milnor invariant) of a link \cite{Milnor1957, Turaev1976} from the viewpoint of arithmetic topology, and also gave a description in terms of the triple Massey product of the \'{e}tale cohomology of ${\rm Spec}(\mathbb{Z}) \setminus \{ p_1, p_2, p_3 \}$ (cf. \cite{Morishita2004} and \cite[Ch.\,9]{Morishita2024}). However, it remains a problem to extend these constructions over a number field $F$ containing $\mu_n$, because there are cohomological obstructions related to the unit group of $F$. So far only triple cubic residue symbols in $F = \mathbb{Q}(\sqrt{-3})$ could be well defined \cite{Amano2018}. 

In this paper, we take a different approach to the problem to construct triple symbols over a number field $F$ containing $\mu_n$. We start with three mod $n$ Kummer characters $\chi_i : G \rightarrow \mathbb{Z}/n\mathbb{Z}$ ($i = 1, 2, 3$) of the absolute Galois group $G := {\rm Gal}(\overline{F}/F)$ rather than three primes of $F$. Fixing a primitive $n$-th root of unity $\zeta$, we identify $\mathbb{Z}/n\mathbb{Z}$ with $\mu_n$ by $a \; \mbox{mod} \; n \mapsto \zeta^a$. Let $S_i$ be primes where $\chi_i$ is ramified.
We assume that  $S_1, S_2, S_3$ are disjoint, that all $\chi_i$'s are tame characters, and that the local ``linking conditions"
$$
[\operatorname{loc}_v^2 (\chi_i \cup \chi_j)] = 0$$
holds for $i \neq j$ and all non-archimedean places $v$ of $F$. Here, for a place $v$ of $F$, let $\operatorname{loc}_v^i$ denote the localization map of continuous group cochains
$$ \operatorname{loc}_v^i : C^i(G, \mathbb{Z}/n\mathbb{Z}) \rightarrow C^i(G_v, \mathbb{Z}/n\mathbb{Z}), $$
where $G_v$ denotes the local absolute Galois group ${\rm Gal}(\overline{F}_v/F_v)$. Now let us take a finite set $S$ of places of $F$ so that $S$ contains $S_1 \cup S_2 \cup S_3$, any place dividing $n$ and all archimedean places of $F$, and let $G_S$ be the Galois group of the maximal extension of $F$ unramified outside $S$. 
Then, using the assumptions and the vanishing of $H^3(G_S, \mathbb{Z}/n\mathbb{Z})$, we find $\eta \in C^2(G,\mathbb{Z}/n\mathbb{Z})$ and $\eta_v \in C^2(G_v,\mathbb{Z}/n\mathbb{Z})$ such that 
$$ d\eta = \chi_1 \cup \chi_2 \cup \chi_3, \;\; d\eta_v = \operatorname{loc}_v^2 (\chi_1 \cup \chi_2 \cup \chi_3),$$
where $d$ denotes the coboundary map. 
Here $\eta_v$ is required to be unramified, a condition to be introduced in the section 2. The key idea to detect the triple symbol is to look at the difference of global cochain $\eta$ and local unramified cochain $\eta_v$ over $v \in S$. Thus, we set
$$
\frak{d}(\chi_1, \chi_2, \chi_3) := \sum_{v \in S} \operatorname{inv}_v(\operatorname{loc}_v^2(\eta) - \eta_v) \in \mathbb{Z}/n\mathbb{Z},
$$
where $\operatorname{inv}_v$ is the invariant map $H^2(G_v, \mathbb{Z}/n\mathbb{Z}) \stackrel{\sim}{\rightarrow} \mathbb{Z}/n\mathbb{Z}$ of local class field theory for a non-archimedean place $v$ or a real place $v$ with $n=2$, and $\operatorname{loc}_v = 0$ for other cases. 
Here, as we will prove in Prop.\,\ref{prop:3}, among other things, that the sum $\frak{d}(\chi_1, \chi_2, \chi_3)$ is independent of a choice of $S$. We then define the triple symbol by
$$ [\chi_1, \chi_2, \chi_3]_n := \zeta^{\frak{d}(\chi_1, \chi_2, \chi_3)}.$$
The idea to consider $\frak{d}(\chi_1,\chi_2,\chi_3)$ is motivated by the first author's computation of arithmetic Chern-Simons invariants \cite{Chung2020}.

As we will show in Cor.\,\ref{cor:alt}, the triple symbol turns out to be alternating.
In particular, when $n=3$, our result confirms a conjecture by Shiraishi~\cite[A.0.5]{Shiraishi2021}.

We briefly discuss the practical aspects about computation of $\mfrak d(\chi_1,\chi_2,\chi_3)$ by means of evaluating the sum above.
Since its number of terms is the cardinality of $S$, it is desirable to work with a small $S$.
On the other hand, $S$ cannot be too small otherwise the necessary cochain $\eta$ will not exist.
Theoretically, it is convenient to require that $S$ contains all $S_i$'s, all places dividing $n$, and all archimedean places, because the combination of them will gaurantee the existence of $\eta$ for any triple of $\chi_i$'s.
Practically, it is computationally effecient to choose a smaller $S$, if possible, for the particular triple of $\chi_i$'s in consideration.
For this reason, we sometimes relax the condition on $S$, notably in Prop.\,\ref{prop:3} and \textsection\,\ref{section:redei}.

The R\'{e}dei's symbol is then recovered from  ours as follows. Choose $\chi_i$ to be the quadratic Kummer character associated to $p_i$. The conditions imposed on $p_1, p_2, p_3$ to define R\'{e}dei's symbol implies that our conditions on $\chi_1, \chi_2, \chi_3$ holds true with $n=2$ and $S = \{ p_1, p_2, p_3, 2, \infty\}$. Then we show $[p_1,p_2,p_3] = [\chi_1,\chi_2,\chi_3]_2$. Similarly, we recover the triple cubic symbol in \cite{Amano2018} as a special case of our symbol. These are shown in \textsection\,\ref{section:redei},\,\ref{section:eisenstein}. Moreover, for these cases, we can show that $\frak{d}(\chi_1,\chi_2, \chi_3)$ gives another description of the Massey product $\langle \chi_1, \chi_2,\chi_3 \rangle$ in \textsection\,\ref{section:massey}.
\\
{\it Acknowledgement}. This work was started during the second author's stay at Seoul National University in October of 2023. The second author is thankful to the first author and SNU for the support and hospitality. 
The first author is supported by the NRF\footnote{National Research Foundation of Korea, No. 2020R1C1C1A01006819}.
We thank Yan Yau Cheng for pointing out an error in an earlier version of the paper.
\\

\section{triple symbols}\label{section:2}
Let $n \ge 2$ be an integer.
Put
\begin{align}\label{def:dn}
D_n := \left\{
\begin{pmatrix}
1&a&b
\\
0&1&c
\\
0&0&1
\end{pmatrix}
\colon a,b,c \in \mathbb Z/n\mathbb Z
\right\}.
\end{align}
When $n$ is unambiguous we simply write $D=D_n$.
This group sits in the short exact sequence of groups
\begin{align}\label{eq:ses}
1 \to Z \to D \xrightarrow{} \overline D \to 1
\end{align}
where $Z \subset D$ denotes the center and $\overline D := D/Z$.
Concretely speaking, $Z$ is the subgroup given by $a=c=0$.
We will identify $Z \simeq \mathbb Z/n\mathbb Z$ via the projection onto the $b$-component.
\par
A standard argument~\cite[\textsection\,I.5.7 Proposition 43]{Serre1994} shows that for any profinite group $G$ acting trivially on $D$, taking continuous cohomology sets associated to (2) yields an exact sequence of pointed sets
\begin{align}\label{eq:boundary}
H^1(G,D) \to H^1(G, \overline D) \xrightarrow{\delta} H^2(G,\mathbb Z/n\mathbb Z).
\end{align}

\begin{lemma}\label{lemma}
A homomorphism $\varphi \colon G \to \overline D$ can be lifted to a continuous homomorphism $G \to D$ if and only if $[\delta\varphi] = 0 $.
\end{lemma}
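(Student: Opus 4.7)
The plan is to deduce the lemma almost directly from the exact sequence of pointed sets~\eqref{eq:boundary} together with the observation that $\overline D$ is abelian. First I would note that because the $G$-action on $D$ is trivial, a continuous $1$-cocycle $G\to D$ is simply a continuous homomorphism, and two such cocycles are cohomologous precisely when they differ by conjugation in $D$; in particular the pointed set $H^1(G,D)$ is the set of continuous homomorphisms $G\to D$ modulo $D$-conjugation. I would then record that $\overline D$ is abelian (from the explicit multiplication in $D_n$, the classes of $(a_1,b_1,c_1)$ and $(a_2,b_2,c_2)$ commute modulo $Z$), so with trivial action we have $H^1(G,\overline D)=\Hom(G,\overline D)$ and every cocycle class is represented by a \emph{unique} homomorphism.

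For the direction ($\Rightarrow$), suppose $\varphi\colon G\to \overline D$ lifts to a continuous homomorphism $\tilde\varphi\colon G\to D$. Then $\tilde\varphi$ represents a class in $H^1(G,D)$ whose image in $H^1(G,\overline D)$ is exactly $[\varphi]$. By exactness of~\eqref{eq:boundary} at $H^1(G,\overline D)$, this forces $[\delta\varphi]=0$.

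For the direction ($\Leftarrow$), assume $[\delta\varphi]=0$. By exactness at $H^1(G,\overline D)$ there exists a class in $H^1(G,D)$ mapping to $[\varphi]$, i.e.\ a continuous homomorphism $\tilde\varphi\colon G\to D$ such that the class of the composition $\pi\circ\tilde\varphi$ agrees with the class of $\varphi$ in $H^1(G,\overline D)$, where $\pi\colon D\to\overline D$ is the projection. Because $\overline D$ is abelian and the action is trivial, each such class consists of a single homomorphism, so $\pi\circ\tilde\varphi=\varphi$ on the nose, and $\tilde\varphi$ is the desired lift.

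The only subtle point that warrants care is the second half: in the general non-abelian setup, exactness of~\eqref{eq:boundary} a priori yields a lift only up to $D$-conjugation on the $\overline D$-side, and one must know that this ambiguity collapses. The abelianness of $\overline D$ together with the triviality of the action removes the ambiguity and gives an honest lift of the specific homomorphism $\varphi$, which is what the statement asserts. Continuity is automatic throughout since we work with continuous cochains in~\cite{Serre1994}.
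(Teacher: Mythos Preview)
Your proof is correct and follows the same route as the paper, which simply says ``Immediate from \eqref{eq:boundary}.'' You have spelled out the details the paper omits---in particular the point that $\overline D$ is abelian so that a cohomology class in $H^1(G,\overline D)$ consists of a single homomorphism, which upgrades the lift-up-to-class coming from exactness to an honest lift of $\varphi$ itself.
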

\begin{proof}
Immediate from \eqref{eq:boundary}.
\end{proof}
We describe the class $\delta \varphi$ in terms of the cup product.
We identify $\overline D \simeq \mathbb Z/n\mathbb Z \times \mathbb Z/n\mathbb Z$ by sending a coset containing 
$\begin{psmallmatrix}
1&a&b
\\
0&1&c
\\
0&0&1
\end{psmallmatrix}$
to $(a,c)$. 
\begin{lemma}
Suppose that $\varphi \colon G \to \overline D$ is given by $\varphi(g) = \left(\chi_1(g),\chi_2(g)\right)$ for some homomorphisms $\chi_1,\chi_2 \colon G \to \mathbb Z/n\mathbb Z$.
Then, $\delta\varphi = \chi_1 \cup \chi_2$.
\end{lemma}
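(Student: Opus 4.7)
The plan is to compute $\delta\varphi$ directly via a set-theoretic section of the extension \eqref{eq:ses} and read off the resulting central 2-cocycle. Concretely, I would choose the obvious section $s\colon \overline{D} \to D$ sending $(a,c)$ to the matrix with $a$-, $c$-entries equal to $(a,c)$ and $b$-entry equal to $0$. The composition $\tilde\varphi := s \circ \varphi \colon G \to D$ is then a continuous 1-cochain lifting $\varphi$; it need not be a homomorphism, and its failure to be one is what represents $\delta\varphi$. By the standard description of the connecting map in the cohomology sequence of \eqref{eq:ses}, a 2-cocycle representing $\delta\varphi$ is
\[
(g,h) \;\mapsto\; \tilde\varphi(g)\,\tilde\varphi(h)\,\tilde\varphi(gh)^{-1} \;\in\; Z.
\]

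The next step is a direct matrix multiplication in $D$. Using the definition \eqref{def:dn} of $D$, the product $\tilde\varphi(g)\tilde\varphi(h)$ has $(1,2)$-entry $\chi_1(g)+\chi_1(h)$, $(2,3)$-entry $\chi_2(g)+\chi_2(h)$, and $(1,3)$-entry $\chi_1(g)\,\chi_2(h)$. Because $\chi_1,\chi_2$ are homomorphisms, the first two entries coincide with those of $\tilde\varphi(gh)$, so right-multiplying by $\tilde\varphi(gh)^{-1}$ kills the $a$- and $c$-components and leaves the central matrix whose $b$-entry is $\chi_1(g)\,\chi_2(h)$. Under the identification $Z \simeq \mathbb Z/n\mathbb Z$ via the $b$-coordinate fixed after \eqref{eq:ses}, this cocycle is exactly the cup-product cochain $(\chi_1 \cup \chi_2)(g,h)$ in the usual normalization for group cohomology.

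The only real subtlety I anticipate is keeping the conventions aligned: one must check that the cocycle $\tilde\varphi(g)\tilde\varphi(h)\tilde\varphi(gh)^{-1}$ (rather than its inverse) is the representative of $\delta\varphi$ that comes out of the nonabelian cohomology sequence used in Lemma~1, and that the order $\chi_1(g)\chi_2(h)$ matches the cup-product convention in force in the rest of the paper (the alternative $\chi_2(g)\chi_1(h)$ would amount to reading the extension \eqref{eq:ses} with the opposite identification of $\overline D$ with $\mathbb Z/n\mathbb Z \times \mathbb Z/n\mathbb Z$). Both points are bookkeeping: the upper-triangular form of $D$ forces precisely the convention $\chi_1(g)\chi_2(h)$ once the identification of $\overline D$ with $(a,c)$-pairs is fixed as in the paragraph preceding the lemma. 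Once these conventions are pinned down, no further argument is needed.
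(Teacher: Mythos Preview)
Your proposal is correct and follows essentially the same approach as the paper: lift $\varphi$ via a set-theoretic section, then compute $\tilde\varphi(g_1)\tilde\varphi(g_2)\tilde\varphi(g_1g_2)^{-1}$ by direct matrix multiplication. The only difference is that the paper allows an arbitrary section (so $\tilde\varphi$ has an unspecified $b$-entry $\alpha(g)$ and the resulting cocycle is $\chi_1\cup\chi_2 + d\alpha$), whereas you pick the specific section with $b=0$; since $d\alpha$ is a coboundary this is immaterial for the class $\delta\varphi$.
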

\begin{proof}
For $g \in G$, let $\tilde{\varphi} (g)$ be a lift of $\varphi(g)$ to $D$. Namely $\tilde{\varphi}(g)=s \circ \varphi(g)$ for some section $s \colon \overline D \to D$ of the projection map in \eqref{eq:ses}.
Then, there is a $1$-cochain $\alpha \in C^1(G,\mathbb Z/n\mathbb Z)$ such that
$$
\tilde\varphi(g) = 
\begin{pmatrix}
1&\chi_1(g)&\alpha(g)
\\
0&1&\chi_2(g)
\\
0&0&1
\end{pmatrix}.
$$
The class $\delta [\varphi]$ is represented by the 2-cocycle $(g_1,g_2) \mapsto \tilde\varphi(g_1)\tilde\varphi(g_2)\tilde\varphi(g_1g_2)^{-1}$.
By straightforward computation, we obtain $\tilde\varphi(g_1)\tilde\varphi(g_2)\tilde\varphi(g_1g_2)^{-1} = \chi_1\cup\chi_2(g_1,g_2) + d\alpha(g_1,g_2)$.
This proves the assertion of the lemma.
\end{proof}
Let $F$ be a number field and $n$ be an integer greater than one such that $F$ contains a primitive $n$-th root of unity.
Denote by $\mu_n \subset F$ the subset of all $n$-th roots of unity.
We fix a generator $\zeta \in \mu_n$ and identify $\mu_n \simeq \mathbb Z/n\mathbb Z$ by sending $\zeta$ to $1$.
Now we take $G=\Gal(\overline F / F)$ for an algebraic closure $\overline F$ of $F$. 
For any set $S$ of places of $F$, let $G_S$ be the quotient of $G$ corresponding to the maximal extension of $F$ unramified outside $S$.
Consider characters
\begin{align*}
\chi_i \colon G\to \mathbb Z/n\mathbb Z
\end{align*}
for $i=1,2,3$. Since $F$ contains $\mu_n$, $\chi_i$ is a Kummer character associated to some $\alpha_i \in F^{\times}$, namely, $\zeta^{\chi_i(g)} = g(\sqrt[n]{\alpha_i})/\sqrt[n]{\alpha_i}$ for $g \in G$. 
Denote by $S_i$ the set of places where $\chi_i$ is ramified.
\par
\begin{proposition}\label{prop:1}
Let $S$ be a finite set of places of $F$ such that $S_i \subset S$ for $i=1,2,3$ and that $S$ contains both all places $v$ dividing $n$ and all archimedean places. 
There is a $2$-cochain $\eta \in C^2(G,\mathbb Z/n \mathbb Z)$ such that
$$
d\eta = \chi_1 \cup \chi_2 \cup \chi_3
$$
and that $\eta$ is in the image of the inflation map $C^2(G_S,\mathbb Z/n\mathbb Z) \to C^2(G,\mathbb Z/n\mathbb Z)$.
\end{proposition}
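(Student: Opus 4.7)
\emph{Proof proposal.} The plan is to descend the problem to $G_S$, where a standard cohomological-dimension argument produces a primitive $\bar\eta$ directly, and then to inflate back to $G$.

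The first observation is that each $\chi_i$ is unramified outside $S_i \subset S$, so it factors uniquely through the quotient $G \twoheadrightarrow G_S$; write $\bar\chi_i \colon G_S \to \mathbb Z/n\mathbb Z$ for the induced character. Since inflation $C^\bullet(G_S, \mathbb Z/n\mathbb Z) \to C^\bullet(G, \mathbb Z/n\mathbb Z)$ is a morphism of differential graded algebras (it commutes with both $d$ and the cup product), it suffices to produce a $2$-cochain $\bar\eta \in C^2(G_S, \mathbb Z/n\mathbb Z)$ satisfying
\[
d\bar\eta = \bar\chi_1 \cup \bar\chi_2 \cup \bar\chi_3;
\]
pulling it back then yields an $\eta$ with both required properties, since by construction it lies in the image of inflation.

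Finding $\bar\eta$ is equivalent to showing that the class of $\bar\chi_1 \cup \bar\chi_2 \cup \bar\chi_3$ in $H^3(G_S, \mathbb Z/n\mathbb Z)$ is trivial. For this I would invoke the standard vanishing theorem for the cohomology of $G_S$ (Neukirch--Schmidt--Wingberg, Ch.~VIII): the hypothesis that $S$ contains every place dividing $n$ and every archimedean place is exactly what is needed to force $H^3(G_S, \mathbb Z/n\mathbb Z) = 0$, after which the remainder of the argument is formal.

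The main obstacle is precisely this $H^3$-vanishing. For $n$ odd, or whenever $F$ is totally imaginary---automatic as soon as $n \geq 3$, since $\mu_n \subset F$ then forces a complex embedding---the vanishing is a direct consequence of the bound $\operatorname{cd}_p(G_S) \leq 2$ for every prime $p \mid n$. The residual case $n = 2$ with a real place of $F$ is more delicate, since $H^3(G_S, \mathbb Z/2\mathbb Z)$ need not vanish outright; there, one identifies it via Poitou--Tate with a sum of local $H^3$'s at real places and must verify separately that the triple cup product restricts trivially at each such place, which is compatible with the local linking-type conditions placed on the triple $(\chi_1,\chi_2,\chi_3)$ in the introduction.
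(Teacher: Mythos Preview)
Your argument is exactly the paper's: descend the cup product to $G_S$ via inflation and invoke $H^3(G_S,\mathbb Z/n\mathbb Z)=0$ from \cite{Neukirch2008}. The paper states this vanishing without qualification, so your discussion of the residual case $n=2$ with real places is in fact more careful than the original---the blanket vanishing does fail there (one has $H^3(G_S,\mathbb Z/2\mathbb Z)\simeq\bigoplus_{v\text{ real}}H^3(G_v,\mathbb Z/2\mathbb Z)$), and it is the disjointness of the $S_i$ from the introduction, rather than the non-archimedean linking conditions, that forces $\loc_v(\chi_1\cup\chi_2\cup\chi_3)=0$ at each real $v$. Note, though, that this extra hypothesis is not part of the proposition as stated, so strictly speaking you are proving a slightly weaker statement than the one written; in the paper's applications the disjointness is always in force, so nothing downstream is affected.
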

\begin{proof}
Observe that $\chi_1\cup\chi_2\cup\chi_3$ represents a cocycle in $C^3(G_S,\mathbb Z/n\mathbb Z)$.
The assertion of the proposition would follow from $H^3(G_S,\mathbb Z/n\mathbb Z)=0$.
Indeed, this is a standard result \cite{Neukirch2008} about cohomological dimension of $G_S$.
\end{proof}
\par
For a place $v$ of $F$, choose an algebraic closure $\overline F_v/F_v$ and put $G_v=\Gal(\overline F_v/F_v)$. 
Fix an embeeding $\overline F \to \overline F_v$ and regard $G_v$ as a subgroup of $G$.
In particular, restriction induces a map between cochains which we denote as
$$
\loc_v^i \colon C^i(G,\mathbb Z/n\mathbb Z) \to C^i(G_v,\mathbb Z/n\mathbb Z)
$$
for each $i \ge 0$.
By a local trivialization of a cocycle $\varphi \in C^i(G,\mathbb Z/n\mathbb Z)$, we mean a cochain
$$
\eta_v \in C^{i-1}(G_v,\mathbb Z/n\mathbb Z)
$$
such that
$$
d\eta_v = \loc_v^i(\varphi).
$$
When $v$ is non-archimedean, we put $G_v^{\ur}:= \Gal(F_v^{\ur}/F_v)$ where $F_v^{\ur}\subset \overline F_v$ is the maximal unramified subextension of $F_v$. 
We say a local trivialization $\eta_v$ is unramified if $\eta_v$ is in the image of the map
$$
C^{i-1}(G_v^{\ur},\Z/n\Z) \to C^{i-1}(G_v,\Z/n\Z)
$$
induced by inflation.
If $v$ is archimedean, by convention, a local trivialization is unramified if it is a constant function.
\par
On the other hand, we say a cochain $\eta \in C^{i-1}(G,\Z/n\Z)$ is a global trivialization unramified outside $S$ if $d\eta = \varphi$ and $\eta$ is in the image of the map
$$
C^{i-1}(G_S,\Z/n\Z) \to C^{i-1}(G,\Z/n\Z)
$$
induced by the inflation along the natural surjection $G \to G_S$.
\begin{proposition}\label{prop:2}
Let $v$ be a non-archimdean place of $F$. Assume that $S_1$, $S_2$, $S_3$ are disjoint and that each $\chi_i$ is tame; the wild inertia group inside $G_v$ is contained in the kernel of $\chi_i \circ \loc_v$.
Further assume that
$$
[\loc_v(\chi_i \cup \chi_j)]=0
$$
for all $i\not = j$. 
Then, $\loc_v^3 \left(\chi_1 \cup \chi_2 \cup \chi_3\right)$ is trivialized by an unramified cochain.
\end{proposition}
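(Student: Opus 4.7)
The approach is a case analysis on whether $v$ belongs to some $S_i$. Since $S_1, S_2, S_3$ are disjoint, $v$ lies in at most one of them. First, suppose $v \notin S_1 \cup S_2 \cup S_3$. Then each $\chi_i \circ \loc_v$ factors through $G_v^{\ur}$, so $\loc_v^3(\chi_1 \cup \chi_2 \cup \chi_3)$ is the inflation of a 3-cocycle on $G_v^{\ur} \cong \hat{\Z}$, which has cohomological dimension one. Consequently $H^3(G_v^{\ur}, \Z/n\Z) = 0$, so that cocycle is a coboundary $d\eta_v^{\ur}$ for some $\eta_v^{\ur} \in C^2(G_v^{\ur}, \Z/n\Z)$, and inflating $\eta_v^{\ur}$ back to $G_v$ produces the desired unramified local trivialization.

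Suppose next that $v$ lies in exactly one $S_i$; after relabeling we may assume $v \in S_1$, so $\chi_2 \circ \loc_v$ and $\chi_3 \circ \loc_v$ are unramified while $\chi_1 \circ \loc_v$ is tame but possibly ramified. The hypothesis $[\loc_v(\chi_1 \cup \chi_2)] = 0$ provides $\beta \in C^1(G_v, \Z/n\Z)$ with $d\beta = \loc_v(\chi_1 \cup \chi_2)$. The Leibniz rule together with $d\loc_v(\chi_3) = 0$ shows that $\eta_v := \beta \cup \loc_v(\chi_3)$ satisfies $d\eta_v = \loc_v^3(\chi_1 \cup \chi_2 \cup \chi_3)$, so $\eta_v$ is at least a local trivialization. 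It remains to choose $\beta$ so that $\eta_v$ lies in the image of the inflation $C^2(G_v^{\ur}, \Z/n\Z) \to C^2(G_v, \Z/n\Z)$.

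This last step is the crux of the argument and the main obstacle. Since $\chi_3 \circ \loc_v$ already factors through $G_v^{\ur}$, the cup product $\beta \cup \loc_v(\chi_3)$ is inflated as soon as $\beta$ is, so it suffices to arrange $\beta$ to be inflated from $G_v^{\ur}$. The indeterminacy in $\beta$ is by an arbitrary 1-cocycle on $G_v$, and I would analyze the Hochschild--Serre spectral sequence for $1 \to I_v \to G_v \to G_v^{\ur} \to 1$: since $H^2(G_v^{\ur}, \Z/n\Z) = 0$, any class in $H^2(G_v, \Z/n\Z)$ is detected by $H^1(G_v^{\ur}, H^1(I_v, \Z/n\Z))$. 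The tameness of $\chi_1$ (forcing it to factor through the tame quotient of $G_v$), together with the vanishing of the cup product classes, should then single out an unramified choice of $\beta$, making $\eta_v$ inflated from $G_v^{\ur}$ as required.
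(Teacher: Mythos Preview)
Your case split and the unramified case are fine and match the paper. The gap is in the ramified case, precisely at the point you flag as ``the crux''. You want an unramified $\beta$ with $d\beta=\loc_v(\chi_1\cup\chi_2)$, but inflation commutes with $d$, so such a $\beta$ can exist only if the \emph{cochain} $\loc_v(\chi_1\cup\chi_2)$ is itself inflated from $G_v^{\ur}$. Since $(\chi_1\cup\chi_2)(g_1,g_2)=\chi_1(g_1)\chi_2(g_2)$ and $\chi_1$ is genuinely ramified, this product factors through $G_v^{\ur}\times G_v^{\ur}$ only when $\chi_2\circ\loc_v=0$, i.e.\ when the cochain $\loc_v(\chi_1\cup\chi_2)$ is identically zero. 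The Hochschild--Serre sequence tells you about cohomology classes, not about cochain-level factorization, so it cannot by itself produce an inflated $\beta$ here; your final paragraph does not close this gap.

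What the paper does is prove directly that $\loc_v(\chi_1\cup\chi_2)=0$ as a cochain (so one may take $\beta=0$). After reducing to $n$ a prime power, write $d_i$ for the order of $\psi_i:=\chi_i\circ\loc_v$; then $\psi_1=\psi_{\mathrm{tm}}^{\,n/d_1}$ for a tame character of exact order $n$ and $\psi_2=\psi_{\ur}^{\,n/d_2}$ for an unramified character of exact order $n$. Local class field theory translates $[\psi_1\cup\psi_2]=0$ into the divisibility $d_1d_2\mid n$, and bilinearity of the cup product at the cochain level gives $\psi_1\cup\psi_2=\tfrac{n^2}{d_1d_2}\,(\psi_{\mathrm{tm}}\cup\psi_{\ur})$, which is then $0$ in $\Z/n\Z$. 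This cochain-level vanishing is the missing ingredient in your argument.
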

\begin{proof}
Put $\psi_i = \loc_v \circ \chi_i$. 
We first prove the assertion when $v$ is non-archimedean.
We do not lose generality by assuming that $n$ is a power of a prime $\ell$, because $C^i(-,A\oplus B)\simeq C^i(-,A)\oplus C^i(-,B)$. 
We claim that $\psi_1 \cup \psi_2 \cup \psi_3=0$ as a cochain, or all of $\psi_i$ are unramified.
It would imply the assertion of the proposition, since $H^3(G_v^{\ur},\Z/n\Z)=0$. 
To prove the claim, let us assume that one of $\psi_i$ is ramified, say $\psi_1$.
Then, by the assumptions, for $i\in\{2,3\}$, $\psi_i$ is unramified.
We claim that $\psi_1 \cup \psi_2=0$ as a cochain.
For $i\in\{1,2\}$, let $d_i$ be the order of $\psi_i$.
Then, $\psi_1$ is tamely ramified so it is of the form $\psi_1  = \psi_{\operatorname{tm}}^{n/d_1}$ for a tamely ramified character $\psi_{\operatorname{tm}}$ of order $n$. 
On the other hand, $\psi_2 = \psi_{\ur}^{n/d_2}$ for an unramified character $\psi_{\ur}$ of order $n$.
Then, the condition $[\psi_1 \cup \psi_2]=0$ is equivalent to $d_1d_2 \mid n$, by local class field theory. 
Since the cup product is bilinear at the level of cochains we have a cochain-level equality $\psi_1 \cup \psi_2=\frac{n^2}{d_1d_2}\left(\psi_{\operatorname{tm}}\cup\psi_{\ur}\right)$. 
Now $d_1d_2\mid n$ implies the aformentioned claim.
From this, we conclude that $\psi_1\cup\psi_2\cup\psi_3=0$ as a cochain.

\par
Now consider the case when $v$ is archimedean.
The disjointness of $S_i$'s implies that at most one of $\psi_v$'s is non-trivial. 
At least two of them are trivial, so we again conclude $\psi_1\cup\psi_2\cup\psi_3=0$ as a cochain.
\end{proof}
Recall that we assumed $\mu_n \subset F$ and fixed an isomorphism $\mu_n \simeq \mathbb Z/n\Z$. The local class field theory provides an isomorphism
$$
\inv_v \colon H^2(G_v,\Z/n\Z) \xrightarrow{\sim} \Z/n\Z
$$
for a non-archimedean place $v$.
If $v$ is archimedean we have $H^2(G_v,\Z/n\Z)=0$ unless $v$ is real and $n$ is even, in which case $\inv_v \colon H^2(G_v,\Z/n\Z) \simeq \Z/2\Z$.

In the next proposition, we temporarily relax the condition on $S$ while in Prop.\ref{prop:1} the set $S$ was required, among other things, to contain all places dividing $n$.
The advantage will be justified in Rem\,\ref{remark:2}.

\begin{proposition}\label{prop:3}
Let $S$ be any set of places containing all archimedean places and $S_i$ for all $i$. 
Suppose that $\chi_1 \cup \chi_2 \cup \chi_3$ admits a global trivialization unramified outside $S$, say $\eta$, and that it admits a local unramified trivialization $\eta_v$ for every place $v$ of $F$.
The sum
\begin{align*}
\frak{d}(\chi_1,\chi_2,\chi_3)=\sum_{v\in S} 
\inv_v \left( \loc_v^2 \eta - \eta_v\right) \in \Z/n \Z
\end{align*}
 is independent of the choices of trivializations.
Also, $\frak{d}(\chi_1,\chi_2,\chi_3)$ is independent of $S$.
\end{proposition}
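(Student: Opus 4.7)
The plan is to establish the three independence claims in order: independence from the choice of the global trivialization $\eta$, then from the choice of the local unramified trivializations $\eta_v$, and finally from $S$ itself. The two ingredients I would rely on throughout are global reciprocity $\sum_v\inv_v=0$ on $H^2(G,\mathbb Z/n\mathbb Z)$, obtained from Brauer--Hasse--Noether via the Kummer identification with $\mathrm{Br}(F)[n]$, and the vanishing $H^2(G_v^{\ur},\mathbb Z/n\mathbb Z)=0$ for non-archimedean $v$, which follows from the cohomological dimension one of $G_v^{\ur}\simeq \widehat{\mathbb Z}$.

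First I would handle independence of $\eta$. Given a second trivialization $\eta'$ unramified outside $S$, the difference $\eta-\eta'$ is a $2$-cocycle in the image of $C^2(G_S,\mathbb Z/n\mathbb Z)\to C^2(G,\mathbb Z/n\mathbb Z)$. For $v\notin S$ its localization factors through $G_v^{\ur}$, so $[\loc_v^2(\eta-\eta')]=0$; summing only over $v\in S$ therefore coincides with summing over all places, and reciprocity makes the total sum vanish.

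Next I would handle independence of $\eta_v$. If $\eta_v'$ is another unramified trivialization, then $\eta_v-\eta_v'$ is a $2$-cocycle inflated from $C^2(G_v^{\ur},\mathbb Z/n\mathbb Z)$; its class therefore lies in the image of $H^2(G_v^{\ur},\mathbb Z/n\mathbb Z)$, which vanishes for non-archimedean $v$. In the archimedean case, the convention makes the difference a constant $2$-cochain, which is the coboundary of the corresponding constant $1$-cochain and hence cohomologically trivial. Thus $\inv_v(\eta_v-\eta_v')=0$ in every case.

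Independence of $S$ will then follow by comparing any two admissible $S,S'$ through their union $S\cup S'$: the trivialization $\eta$ for $S$ remains valid after enlarging, and for each $v\in(S\cup S')\setminus S$, necessarily non-archimedean because all archimedean places lie in $S$, both $\loc_v^2\eta$ and $\eta_v$ are unramified, so $\inv_v(\loc_v^2\eta-\eta_v)$ vanishes by the same argument as in the second step. The main obstacle I anticipate is in the first step, where one must verify that the class $[\eta-\eta']\in H^2(G,\mathbb Z/n\mathbb Z)$ is correctly transported to the Brauer group via the Kummer sequence so that global reciprocity applies, and that the truncation of this sum to $v\in S$ matches the vanishing of $\loc_v^2(\eta-\eta')$ in $H^2(G_v)$ for $v\notin S$; once these compatibilities are spelled out, the remaining steps are formal.
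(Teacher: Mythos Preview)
Your proof is correct and follows essentially the same route as the paper: global reciprocity for independence of $\eta$, vanishing of $H^2(G_v^{\ur},\mathbb Z/n\mathbb Z)$ for independence of $\eta_v$, and unramifiedness of $\loc_v^2\eta$ at $v\notin S$ for independence of $S$. You are in fact slightly more careful than the paper in two places: you spell out why the sum over $v\in S$ may be replaced by the sum over all $v$ before invoking reciprocity (the paper leaves this implicit), and you treat the archimedean case of step two explicitly via the coboundary of a constant $1$-cochain, which the paper does not mention.
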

\begin{proof}
For two global trivializations $\eta$ and $\eta'$, $\eta-\eta'$ is a class in $H^2(G,\Z/n\Z)$. 
By the global reciprocity, the sum $\frak{d}(\chi_1,\chi_2,\chi_3)$ remains the same.
On the other hand, for a place $v$ and two unramified trivializations $\eta_v$ and $\eta_v'$, the difference $\eta_v-\eta_v'$ is unramified.
In particular, the difference is the image of a class of $H^2(G_v^{\ur},\Z/n\Z)$. 
The group $H^2(G_v^{\ur},\Z/n\Z)$ is trivial because $G_v^{\ur}\simeq \hat{\Z}$ has cohomological dimension one.
So $\frak{d}(\chi_1,\chi_2,\chi_3)$  is independent of the choices of $\eta$ and $\eta_v$'s.

Now we prove the independence of $S$.
It suffices to show that $\frak{d}(\chi_1,\chi_2,\chi_3)$  remains the same if we replace $S$ with a larger set $S'$.
Suppose $\chi_1 \cup \chi_2 \cup \chi_3$ admits a global trivialization unramified outside of $S$, say $\eta$.
If $S' \supset S$, then $\eta$ is a global trivialization unramified outside $S'$ because the surjection $G \twoheadrightarrow G_S$ factors through $G \twoheadrightarrow G_{S'}(F)$. 
We claim that for $v \in S'-S$, we have $\inv_v\left( \loc_v^2 \eta - \eta_v\right)=0$. 
Indeed, $\loc_v^2\eta$ is in the image of $C^2(G_v^{\ur},\Z/n\Z) \to G^2(G_v,\Z/n\Z)$, because $\eta$ is in the image of
the inflation map $C^2(G_S,\Z/n\Z) \to C^2(G_{S'}(F),\Z/n\Z)$.
It follows that $\loc_v^2 \eta - \eta_v$ is unramified. 
From $H^2(G^{\ur}_v,\Z/n\Z)=0$, we conclude $\inv_v \left( \loc_v^2 \eta - \eta_v\right)=0$.
We have shown that $\frak{d}(\chi_1,\chi_2,\chi_3)$  is independent of $S$.
\end{proof}
\begin{remark}
Its proof is similar to that for decomposition formula \cite[\textsection\,4]{Chung2020} in the arithmetic Dijkgraaf-Witten theory.
\end{remark}
\begin{remark}\label{remark:2}
In Prop.\,\ref{prop:1} we require that $S$ contains all places dividing $n$ in order to ensure the existence of a global trivialization $\eta$.
However, there might exist a global trivialization $\eta'$ unramified outside a proper subset $S'$ of $S$.
On the other hand, in Prop.\,\ref{prop:3} the condition on $S$ is partially relaxed in order to allow such $S'$.
When $\eta'$ is available, we use it instead of $\eta$ to obtain a shorter sum.
\end{remark}
Recall that we have fixed a primitive $n$-th root of unity $\zeta$ which maps to $1$ under the isomorphism $\mu_n \simeq \Z/n\Z$.
\begin{definition}\label{def:triple}
Suppose that $\chi_1,\chi_2$ and $\chi_3$ satisfy the assumptions in Prop.\,\ref{prop:3}. Define 
\begin{align}\label{eq:def2}
[\chi_1,\chi_2,\chi_3]_n := \zeta^{\frak{d}(\chi_1,\chi_2,\chi_3)}.
\end{align}
\end{definition}\begin{theorem}\label{thm:1}
Suppose that $S_i$'s are disjoint and that $\loc _v^1 \chi_i$ is tame for all $i$ and all $v$. 
If $[\loc_v^2 \left(\chi_i \cup  \chi_j\right)]=0$ for all non-arhimedean $v$ and all $1\le i<j\le 3$, then
\eqref{eq:def2} is defined.
\end{theorem}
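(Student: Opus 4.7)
The plan is to verify the two hypotheses of Proposition~\ref{prop:3}, after which Definition~\ref{def:triple} applies directly. Those hypotheses are the existence of a global trivialization $\eta$ of $\chi_1 \cup \chi_2 \cup \chi_3$ unramified outside some suitable $S$ (containing the archimedean places and all $S_i$), and the existence of an unramified local trivialization $\eta_v$ of $\loc_v^3(\chi_1 \cup \chi_2 \cup \chi_3)$ at every place $v$.

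For the global ingredient I would take
$$
S = S_1 \cup S_2 \cup S_3 \cup \{v : v \mid n\} \cup \{v \text{ archimedean}\}
$$
and invoke Proposition~\ref{prop:1} verbatim; this gives $\eta$ from the vanishing $H^3(G_S,\mathbb Z/n\mathbb Z)=0$. Note that the hypotheses of Theorem~\ref{thm:1} impose no constraint on $S$, so we are free to enlarge it as needed.

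For the local ingredient at a non-archimedean $v$, the hypotheses in Theorem~\ref{thm:1} are tailored to be precisely those of Proposition~\ref{prop:2}: disjointness of $S_1,S_2,S_3$, tameness of each $\loc_v^1 \chi_i$, and $[\loc_v^2(\chi_i \cup \chi_j)]=0$ for $i \ne j$. Applying Proposition~\ref{prop:2} therefore produces the required unramified $\eta_v$. For an archimedean $v$ the disjointness of the $S_i$ forces at least two of the restrictions $\loc_v^1 \chi_i$ to vanish as cochains, so that $\loc_v^3(\chi_1 \cup \chi_2 \cup \chi_3) = 0$ as a $3$-cochain; the constant cochain $\eta_v = 0$, which is unramified by the convention of Section~\ref{section:2}, then serves as a local trivialization.

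With both the global $\eta$ and the family $\{\eta_v\}_{v}$ in hand, the sum $\mathfrak d(\chi_1,\chi_2,\chi_3)$ of Proposition~\ref{prop:3} is defined and, by that proposition, independent of all choices and of $S$; hence so is $[\chi_1,\chi_2,\chi_3]_n = \zeta^{\mathfrak d(\chi_1,\chi_2,\chi_3)}$. There is no substantive obstacle: the work of the theorem is entirely bookkeeping, matching the standing hypotheses against those of Propositions~\ref{prop:1} and \ref{prop:2}, with the archimedean places handled separately by disjointness. The genuine content has already been absorbed into the preceding propositions.
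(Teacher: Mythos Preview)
Your proposal is correct and follows essentially the same route as the paper: invoke Proposition~\ref{prop:1} (with $S$ enlarged to contain places above $n$ and archimedean places) for the global trivialization, invoke Proposition~\ref{prop:2} for the non-archimedean local trivializations, handle archimedean places directly via disjointness of the $S_i$, and conclude with Proposition~\ref{prop:3}. The only cosmetic difference is that the paper states ``at least one of $\chi_i$'s is trivial at $v$'' for archimedean $v$, whereas you (correctly, and matching the argument inside the proof of Proposition~\ref{prop:2}) observe that at least two are trivial; either statement suffices to kill the triple cup product as a cochain.
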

\begin{proof}
By Proposition 1, global trivialization $\eta$ exists if $S$ contains both all places dividing $n$ and all archimedean places.
On the other hand, by Proposition 2, an unramified trivialization exists at every non-arhimedean $v$.
For an archimdean place $v$, the disjointness of $S_i$'s imply that at least one of $\chi_i$'s is trivial at $v$. 
It follows that $\loc_v^3(\chi_1 \cup \chi_2 \cup \chi_3)=0$ so it is trivialized by an unramified cochain.
The remaining independence follows from Proposition 3.
\end{proof}

\begin{corollary}\label{cor:alt}
For any permuation $\sigma$ on the set $\{1,2,3\}$ with sign $|\sigma|$,
we have
\begin{align}\label{eq:alt}
[\chi_1,\chi_2,\chi_3]_n=[\chi_{\sigma(1)},\chi_{\sigma(2)},\chi_{\sigma(3)}]_n^{|\sigma|}
\end{align}
whenever the triple symbol is defined.
\end{corollary}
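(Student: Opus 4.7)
Since the transpositions $(1\,2)$ and $(2\,3)$ generate the symmetric group on $\{1,2,3\}$, it suffices to prove $\mathfrak{d}(\chi_2,\chi_1,\chi_3)=-\mathfrak{d}(\chi_1,\chi_2,\chi_3)$ and $\mathfrak{d}(\chi_1,\chi_3,\chi_2)=-\mathfrak{d}(\chi_1,\chi_2,\chi_3)$. I describe the argument for the first identity; the second follows by the analogous argument using the correction $\chi_1 \cup (\chi_2\chi_3)$ in place of $(\chi_1\chi_2)\cup\chi_3$. The main tool is the cochain-level graded commutativity
\[
\chi_i \cup \chi_j + \chi_j \cup \chi_i = -d(\chi_i\chi_j),
\]
where $\chi_i\chi_j$ denotes the pointwise product of the two characters, viewed as a $1$-cochain of $G$ valued in $\mathbb Z/n\mathbb Z$.

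Starting from a pair $(\eta,\{\eta_v\})$ of trivializations computing $\mathfrak{d}(\chi_1,\chi_2,\chi_3)$, I build a candidate pair for $\mathfrak{d}(\chi_2,\chi_1,\chi_3)$. Globally, set $\tilde\eta := -\eta -(\chi_1\chi_2)\cup \chi_3$; a Leibniz-rule computation combined with the identity above yields $d\tilde\eta = \chi_2\cup\chi_1\cup\chi_3$, and $\tilde\eta$ remains in the image from $G_S$. Locally, writing $\psi_i := \loc_v^1 \chi_i$, I distinguish two cases at each $v \in S$. \emph{Case A}: every $\psi_i$ is unramified at $v$. Set $\tilde\eta_v := -\eta_v -(\psi_1\psi_2)\cup\psi_3$; since pointwise products and cup products of unramified cochains remain unramified, $\tilde\eta_v$ is unramified and $d\tilde\eta_v = \psi_2\cup\psi_1\cup\psi_3$. \emph{Case B}: exactly one $\psi_i$ is ramified. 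The proof of Proposition~\ref{prop:2} shows $\psi_2\cup\psi_1\cup\psi_3 = 0$ as a cochain, so $\tilde\eta_v:=0$ is a valid unramified trivialization; symmetrically, $\eta_v := 0$ is an unramified trivialization for the original triple product.

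In Case A a direct substitution gives $\loc_v^2\tilde\eta - \tilde\eta_v = -(\loc_v^2\eta - \eta_v)$ after the $(\psi_1\psi_2)\cup\psi_3$ contributions cancel. In Case B the same conclusion reduces to the \emph{key lemma}: $(\psi_1\psi_2)\cup\psi_3$ vanishes as a cochain on $G_v$. To prove the lemma, let $d_i$ denote the order of $\psi_i$. The proof of Proposition~\ref{prop:2} shows that whenever one of $\psi_i,\psi_j$ is ramified and the other unramified, the hypothesis $[\psi_i\cup\psi_j]=0$ forces $d_i d_j \mid n$, and hence $\psi_i(g)\psi_j(g')=0$ in $\mathbb Z/n\mathbb Z$ for all $g,g' \in G_v$. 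Evaluating
\[
((\psi_1\psi_2)\cup\psi_3)(g_1,g_2) = \psi_1(g_1)\psi_2(g_1)\psi_3(g_2)
\]
and grouping the three factors around the unique ramified $\psi_i$ (for instance, when $\psi_3$ is ramified, regroup as $\psi_2(g_1)\cdot \psi_1(g_1)\psi_3(g_2)$ and use $\psi_1(g_1)\psi_3(g_2)=0$) yields vanishing in each of the three subcases. Applying $\inv_v$ termwise and summing over $v \in S$ then gives $\mathfrak{d}(\chi_2,\chi_1,\chi_3) = -\mathfrak{d}(\chi_1,\chi_2,\chi_3)$, as required.

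The main obstacle is precisely the key lemma: its cochain-level vanishing is strictly stronger than cohomological graded commutativity and relies on the multiplicative order bookkeeping extracted in the proof of Proposition~\ref{prop:2}. Once this is in place, the remainder is a Leibniz-rule computation together with the subcase analysis.
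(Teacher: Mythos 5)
Your argument is correct and follows the same skeleton as the paper's proof --- transport the trivializations along the permutation and compare the local terms one by one --- but it supplies a step that the paper's one-line argument glosses over. The paper asserts the \emph{cochain-level} identity $\chi_1\cup\chi_2\cup\chi_3=|\sigma|\,(\chi_{\sigma(1)}\cup\chi_{\sigma(2)}\cup\chi_{\sigma(3)})$ and concludes that $|\sigma|\eta$ and $|\sigma|\eta_v$ are trivializations of the permuted cocycle. But cup products of $1$-cocycles are alternating only in cohomology; at the cochain level one has exactly your identity $\chi_i\cup\chi_j+\chi_j\cup\chi_i=-d(\chi_i\chi_j)$, and this coboundary is not zero in general (evaluate at $g_1=g_2$), so $d(|\sigma|\eta)$ differs from the permuted triple cup product by a nontrivial coboundary and $|\sigma|\eta$ is not literally a global trivialization of it. Your proof repairs this: the corrected cochain $-\eta-(\chi_1\chi_2)\cup\chi_3$ honestly trivializes $\chi_2\cup\chi_1\cup\chi_3$, the matching local correction preserves the termwise identity $\loc_v^2\tilde\eta-\tilde\eta_v=-(\loc_v^2\eta-\eta_v)$ in the all-unramified case, and your key lemma --- that $(\psi_1\psi_2)\cup\psi_3$ vanishes \emph{as a cochain} when exactly one $\psi_i$ is ramified, by the same $d_id_j\mid n$ bookkeeping used in the proof of Prop.~\ref{prop:2} --- disposes of the remaining non-archimedean places. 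What your route buys is an argument valid at the level at which $\mathfrak{d}$ is actually defined (cochains, not cohomology classes), for every $n$; what the paper's route buys is brevity. Two cosmetic remarks: tracking the Koszul sign in the Leibniz rule gives the correction $+\chi_1\cup(\chi_2\chi_3)$ (plus sign) for the transposition $(2\,3)$, which does not affect the structure of the argument; and the archimedean places of $S$ fall outside your Case A/Case B dichotomy as literally stated, but there the disjointness of the $S_i$ forces at least two of the $\psi_i$ to vanish, so both the permuted cocycle and the correction term are identically zero and those places contribute nothing on either side.
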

\begin{proof}
Since the cup product is alternating, we have
$$
 \chi_{1} \cup \chi_{2} \cup \chi_{3}
=
|\sigma|
 \left(
\chi_{\sigma(1)} \cup \chi_{\sigma(2)} \cup \chi_{\sigma(3)}
\right).
$$
The conditions in Thm.\,\ref{thm:1} are indifferent to the ordering of the characters, so the left hand side of \eqref{eq:alt} is defined if and only if the right hand side is defined.
If $\eta$ is a global trivialization for $\chi_1 \cup \chi_2 \cup \chi_3$, then $|\sigma|\eta$ is one for $\chi_{\sigma(1)} \cup \chi_{\sigma(2)} \cup \chi_{\sigma(3)}$. 
Similarly, if $\eta_v$ is a local unramified trivialization for $\chi_1 \cup \chi_2\cup \chi_3$, then $|\sigma|\eta_v$ is one for $\chi_{\sigma(1)} \cup \chi_{\sigma(2)} \cup \chi_{\sigma(3)}$. 
From this, one concludes \eqref{eq:alt}.
\end{proof}

\section{R\'edei symbols}\label{section:redei}
In \cite{Redei1939}, R\'edei introduced a triple symbol $[d_1,d_2,d_3]$ for non-zero integers $d_i \in \mathbb Z$.
They are defined only when certain conditions are satisfied among $d_i$'s. 
Later, variants are studied in \cite{Froehlich1960} and \cite{Furuta1980}.

Our aim in this section is to recall the case when $d_i=p_i$ is a prime, and show that R\'edei's symbol agrees with ours. 
Let $p_1,p_2$ and $p_3$ be distinct prime numbers such that $p_i \equiv 1\, \mod 4$ ($i = 1, 2, 3$).
When
$$
\left(\frac{p_i}{p_j}\right)=1
$$
for all $1 \le i < j \le 3$, R\'edei defined his triple symbol
$$
[p_1,p_2,p_3]\in \{\pm 1\}
$$
which we reproduce here.
\begin{definition}
Let $p_1, p_2$ be distinct prime numbers such that $p_1 \equiv p_2 \equiv 1\,(\mod 4)$.
A R\'edei extension associated to $p_1$ and $p_2$ is a Galois extension $K/\mathbb Q$ such that
\begin{enumerate}
\item $K/\mathbb{Q}$ is unramified outside $p_1, p_2$ and $\infty$,
\item the Galois group ${\rm Gal}(K/\mathbb{Q})$ is isomophic to $D_2$ in (1), the dihedral group of order 8, and
\item the ramification indices of $p_1$ and $ p_2$ are exactly 2.
\end{enumerate}
\end{definition}
\begin{proposition}[R\'edei and  Amano]\label{thm:2}
Suppose that $\left(\frac{p_1}{p_2} \right)=1$. 
Then, a R\'edei extension exists and is unique. 
It is given by 
\begin{align}
K_{p_1,p_2}:=\Q(\sqrt p_1, \sqrt p_2, \sqrt \beta)
\end{align}
where $\beta = x + y \sqrt p$ satisfies the following conditions:
\begin{enumerate}
\item $2 \mid y$ and $x-y \equiv 1\,\mod 4$,
\item $x^2-p_1 y^2- p_2z^2-0$ for some $z \in \mathbb Z$, and 
\item $x,y,z$ are relatively prime.
\end{enumerate}
\end{proposition}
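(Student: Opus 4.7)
The plan is to proceed in two stages: first construct $K_{p_1,p_2}$ as described and verify that it is a R\'edei extension, then prove uniqueness. The key analytic input for existence is the Hasse norm theorem applied to the quadratic extension $\Q(\sqrt{p_1})/\Q$; the organizing principle for uniqueness is the observation that any dihedral extension of order $8$ of $\Q$ sits in a tower whose bottom layer is forced by the ramification hypotheses.

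For existence, I first establish that the Diophantine equation $x^2 - p_1 y^2 = p_2 z^2$ has a primitive integer solution. This amounts to the assertion that $p_2$ is a norm from $\Q(\sqrt{p_1})$, and by the Hasse norm theorem it suffices to check local solvability at every place. Archimedean solvability is immediate since $p_1>0$; at primes $\ell \notin \{2,p_1,p_2\}$ the extension is unramified and $p_2$ is a local unit, so solvability is automatic; at $\ell=p_2$ the hypothesis $(p_1/p_2)=1$ forces the extension to split; at $\ell=p_1$ quadratic reciprocity together with $p_1\equiv p_2\equiv 1\pmod 4$ gives $(p_2/p_1)=1$, and at $\ell=2$ the same congruences handle the possibly ramified case. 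Given a primitive solution, elementary $2$-adic manipulation (adjusting signs and replacing $(x,y,z)$ by suitable unimodular combinations) produces one satisfying $2\mid y$ and $x-y\equiv 1\pmod 4$.

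Setting $\beta = x + y\sqrt{p_1}$, the identity $\beta\,\sigma(\beta)=p_2 z^2$, where $\sigma$ is the non-trivial element of $\Gal(\Q(\sqrt{p_1})/\Q)$, shows that $\sqrt{\sigma(\beta)} = z\sqrt{p_2}/\sqrt{\beta}$ already lies in $\Q(\sqrt{p_1},\sqrt{p_2},\sqrt{\beta})$, so this field is Galois over $\Q$. The lift of $\sigma$ sending $\sqrt{\beta}\mapsto z\sqrt{p_2}/\sqrt{\beta}$ has order $4$ and fails to commute with the lift of the non-trivial automorphism of $\Q(\sqrt{p_2})/\Q$ that fixes $\sqrt{\beta}$; this exhibits the Galois group as $D_2$. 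A local analysis using conditions (1)--(3) then shows that $2$ is unramified (the congruence $x-y\equiv 1\pmod 4$ makes $\beta$ a unit square modulo $4$ in the relevant completion), while $p_1$ and $p_2$ ramify with index exactly $2$ and no other prime ramifies.

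For uniqueness, any R\'edei extension $K$ has dihedral Galois group of order $8$, hence abelianization $(\Z/2\Z)^2$. Since $K$ is unramified outside $\{p_1,p_2,\infty\}$, the maximal $(\Z/2\Z)^2$-subextension of $K$ must equal $\Q(\sqrt{p_1},\sqrt{p_2})$, the unique such biquadratic field. Thus $K = \Q(\sqrt{p_1},\sqrt{p_2},\sqrt{\gamma})$ for some $\gamma$, and the requirement that the full Galois group be $D_2$ rather than $(\Z/2\Z)^3$ forces the class of $\gamma$ in $\Q(\sqrt{p_1},\sqrt{p_2})^\times/(\Q(\sqrt{p_1},\sqrt{p_2})^\times)^2$ to be Galois-equivariant in a way that, by Kummer descent, puts $\gamma$ into $\Q(\sqrt{p_1})^\times$ up to squares; the ramification constraints then pin $\gamma$ down modulo norms from $\Q(\sqrt{p_1})$, and comparison with the constructed $\beta$ gives $K = K_{p_1,p_2}$. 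The main technical obstacle will be the $2$-adic analysis: both the normalization of $(x,y,z)$ and the verification that (1) is exactly the condition making $\sqrt{\beta}$ locally unramified at $2$ require careful tracking of $\beta$ modulo powers of $2$ in $\Q_2(\sqrt{p_1})$, and these must be matched with the rigidity statement used in the uniqueness step.
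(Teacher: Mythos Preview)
The paper does not actually prove this proposition: its entire proof reads ``See Theorem 1.2 and Theorem 2.1 of \cite{Amano2014}.'' Your sketch therefore goes well beyond what the paper does, and its overall architecture---Hasse norm theorem for the existence of $(x,y,z)$, explicit Kummer generators to identify the Galois group, local $2$-adic analysis for the unramifiedness at $2$, and a Kummer-theoretic descent for uniqueness---is the standard route and is essentially what one finds in the cited reference.

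Two small points are worth tightening. First, the lift of $\sigma$ you describe need not have order $4$: if it fixes $\sqrt{p_2}$ then $\sigma^2(\sqrt\beta)=\sqrt\beta$ and the order is $2$. The order-$4$ element is the lift that \emph{also} negates $\sqrt{p_2}$; with that choice your commutator computation goes through. Second, in the uniqueness step you assert that Kummer descent forces $\gamma$ into $\Q(\sqrt{p_1})^\times$ up to squares, but you should say why $\Q(\sqrt{p_1})$ rather than $\Q(\sqrt{p_2})$ or $\Q(\sqrt{p_1p_2})$: this comes from identifying which of the three order-$4$ subgroups of $D_2$ is cyclic (equivalently, over which quadratic subfield $K$ is $(\Z/2\Z)^2$ rather than $\Z/4\Z$), and that identification uses the inertia constraints you have already invoked. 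These are refinements rather than genuine gaps.
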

\begin{proof}
See Theorem 1.2 and Theorem 2.1 of \cite{Amano2014}.
\end{proof}
If $p_3$ is a prime not dividing $p_1 p_2$ and satisfies $\left(\frac{p_1}{p_3} \right)= \left(\frac{p_2}{p_3} \right)=1$, then $p_3$ is totally decomposed in $\mathbb Q(\sqrt p_1 , \sqrt p_2)$.
Thus, the degree of $r$ in $K_{p_1,p_2}$, say $f(K_{p_1,p_2},p_3)$, is either one or two.
R\'edei's triple symbol is defined as
\begin{align}
[p_1,p_2,p_3] :=
\begin{cases}
1 &\text{if  $f(K_{p_1,p_2},p_3)=1$},
\\
-1 &\text{if  $f(K_{p_1,p_2},p_3)=2$}.
\end{cases}
\end{align}

We want to recover the R\'{e}dei symbol $[p_1,p_2,p_3]$ from Definition 1.
\begin{theorem}\label{thm:compare-redei}
Let $p_1,p_2$ and $p_3$ be prime numbers such that $p_i \equiv 1\,\mod 4$ ($i = 1,2,3$) and that $\left(\frac{p_i}{p_j}\right)=1$ ($1\le i<j\le 3)$.
Denote by $\chi_i : G \rightarrow \mathbb{Z}/2\mathbb{Z}$ the character defined  by $(-1)^{\chi_i(g)} = g(\sqrt{p_i})/\sqrt{p_i}$ for $g \in G$. 
Then, $[\chi_1,\chi_2,\chi_3]_2$ is defined and satisfies 
$$[\chi_1,\chi_2,\chi_3]_2 = [p_1,p_2,p_3].$$
\end{theorem}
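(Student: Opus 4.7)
The plan is to exploit the R\'edei extension $K_{p_1,p_2}/\mathbb Q$ itself to produce an explicit global trivialization $\eta$ of $\chi_1 \cup \chi_2 \cup \chi_3$ as a cup product, and then to evaluate $\mathfrak{d}(\chi_1,\chi_2,\chi_3)$ place-by-place, with the summands away from $p_3$ vanishing on the nose and the term at $p_3$ recovering $[p_1,p_2,p_3]$ via local class field theory.

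First, I would invoke Prop.\,\ref{thm:2} to obtain the R\'edei extension $K_{p_1,p_2}/\mathbb Q$ as a $D_2$-extension unramified outside $S' := \{p_1,p_2,\infty\}$; the hypothesis $p_i \equiv 1 \bmod 4$ is essential since it is precisely what makes the ramification at $2$ avoidable. The resulting Galois surjection $\tilde\varphi\colon G \to D_2$ factors through $G_{S'}$ and lifts $(\chi_1,\chi_2)\colon G \to \overline D_2$. Applying Lemma 2 to $\tilde\varphi$ yields a $1$-cochain $\alpha \in C^1(G_{S'},\mathbb Z/2\mathbb Z)$ with $d\alpha = \chi_1 \cup \chi_2$. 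Setting $\eta := \alpha \cup \chi_3$, the Leibniz rule together with $d\chi_3=0$ gives $d\eta = d\alpha \cup \chi_3 = \chi_1 \cup \chi_2 \cup \chi_3$, and $\eta \in C^2(G_S,\mathbb Z/2\mathbb Z)$ for $S := \{p_1,p_2,p_3,\infty\}$. By Remark\,\ref{remark:2}, I may compute $\mathfrak d$ using this $\eta$ and this smaller $S$.

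Next I would analyze the local contributions. At $v=\infty$, the positivity $p_i > 0$ forces $\loc_\infty \chi_i = 0$ as a cochain for every $i$. At $v = p_1$, the hypothesis $\left(\tfrac{p_3}{p_1}\right)=1$ (equivalent via quadratic reciprocity to $\left(\tfrac{p_1}{p_3}\right)=1$ given $p_i \equiv 1 \bmod 4$) gives $\loc_{p_1}\chi_3 = 0$ as a cochain, hence $\loc_{p_1}^2\eta = \loc_{p_1}\alpha \cup \loc_{p_1}\chi_3 = 0$; the same reasoning applies at $p_2$. In each such case the unramified local trivialization $\eta_v=0$ works, and the place contributes $0$. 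The main term is at $v=p_3$. Here $\alpha$ is unramified, so $\loc_{p_3}\alpha$ descends to a homomorphism $G_{p_3}^{\ur} \to \mathbb Z/2\mathbb Z$, corresponding by Kummer theory to a class $\gamma \in \mathbb Z_{p_3}^\times/(\mathbb Z_{p_3}^\times)^2$ which cuts out the completion of $K_{p_1,p_2}$ at $p_3$ over the (totally split) biquadratic subfield. The conditions $\left(\tfrac{p_i}{p_3}\right)=1$ give $\loc_{p_3}\chi_i = 0$ for $i=1,2$, so $\eta_{p_3}=0$ is admissible. By local class field theory,
\begin{align*}
\inv_{p_3}\bigl(\loc_{p_3}^2 \eta\bigr)
=\inv_{p_3}\bigl(\loc_{p_3}\alpha \cup \loc_{p_3}\chi_3\bigr)
=(\gamma,p_3)_{p_3}
=\left(\tfrac{\gamma}{p_3}\right),
\end{align*}
which in additive $\mathbb Z/2\mathbb Z$ notation equals $\loc_{p_3}\alpha(\mathrm{Frob}_{p_3})$. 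This vanishes exactly when $\mathrm{Frob}_{p_3}$ lies in the kernel of $\loc_{p_3}\alpha$, equivalently when $p_3$ splits completely in $K_{p_1,p_2}$, i.e.\ when $[p_1,p_2,p_3]=1$. Summing, $\zeta^{\mathfrak d(\chi_1,\chi_2,\chi_3)} = [p_1,p_2,p_3]$, which is the asserted equality.

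The hard step I expect is the local computation at $p_3$: one must carefully match the cochain-level cup product $\loc_{p_3}\alpha \cup \loc_{p_3}\chi_3$ with the Hilbert symbol $(\gamma,p_3)_{p_3}$ and in turn with the Frobenius-at-$p_3$ splitting in $K_{p_1,p_2}$. A subsidiary delicate point is arranging $\alpha$ at the cochain (not merely cohomology) level to be unramified outside $\{p_1,p_2,\infty\}$; this rests on the explicit description of the R\'edei extension in Prop.\,\ref{thm:2}, in particular the integrality conditions on $x,y,z$ which rule out wild ramification at $2$.
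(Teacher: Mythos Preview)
Your proposal is correct and follows essentially the same route as the paper: both use the R\'edei extension to produce a $1$-cochain (your $\alpha$, the paper's $\epsilon$) trivializing $\chi_1\cup\chi_2$ and unramified outside $\{p_1,p_2,\infty\}$, set $\eta=\alpha\cup\chi_3$, observe that the local terms at $p_1,p_2,\infty$ vanish because $\loc_v\chi_3=0$ there, and identify the sole surviving term at $p_3$ with the Frobenius splitting of $p_3$ in $K_{p_1,p_2}$ via local class field theory. Your phrasing through the Hilbert symbol $(\gamma,p_3)_{p_3}$ is equivalent to the paper's direct evaluation $\psi_3(p_3)$ of the unramified character at Frobenius.
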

\begin{proof}
We first note that $[\chi_1,\chi_2,\chi_3]_2$ is defined by Theorem 1.
So it remains to compare it to R\'edei's symbol.
Let $K_{p_1,p_2}$ be the unique R\'edei extension given by (6) in  Proposition 4.
It gives a homomorphism $G \to D_2$.
This lifts the homomorphism $G \to \Z/2\Z \times \Z/2\Z$ given by $\chi_1 \times \chi_2$. 
By Lemma 1, this gives a trivialzation $\epsilon \in C^1(G,\Z/2\Z)$ for $\chi_1 \cup \chi_2$ unramified outside $\{p_1,p_2,\infty\}$.
Put
$$
\eta = \epsilon \cup \chi_3
$$
then $\eta$ is a trivialization of $\chi_1 \cup \chi_2 \cup \chi_3 $ unramified outside $S=\{p_1,p_2,p_3,\infty\}$.

Table~\ref{table:redei} shows the upshot of local computations that are needed to determine $[\chi_1,\chi_2,\chi_3]_2$.
In view of Remark.\,\ref{remark:2}, we note that there are no contributions from the place $2$, becauce $\eta$ is unramified at $2$.
The first row indicates that $K_{p_1,p_2}$ is ramified at $p_1$ and $p_2$ and possibly at $\infty$.
The second row indicates that $\chi_3$ is ramified exactly at $p_3$ and trivial at either $p_1$ or $p_2$.
The third row indicates that the local unramified trivializations are taken to be zero whenever $\loc_v^3(\chi_1\cup\chi_2\cup\chi_3)$ is identically zero as a cochain.
The fourth row indicates that the constraints in the first three columns forces $\inv_v(\loc_v^2(\eta)-\eta_v)$ to be zero except for $v=p_3$.
So we obtain $[\chi_1,\chi_2,\chi_3]_2=(-1)^m$.

\begin{table}[htb]
\def\arraystretch{1.2}
\begin{tabular}{|c||c|c|c|c|c|}
\hline
$v$		&	$p_1$	or  	$p_2$ 	& 	$p_3$		&		$\infty$
\\
\hline 
$\operatorname{loc}_v^2\left(\eta\right)$
				& ramified 		&unramified 	 & possibly ramified
\\
\hline
$\loc_v^1 \circ\chi_3$ 			&$0$			&ramified	 &0
\\
\hline
$\eta_v$ 		&	$0$ 			& 0	&0 
\\
\hline
$\inv_v\left(\operatorname{loc}_v^2\left(\eta\right) - \eta_v\right)$
				&	$0$			&	$m$				&$0$
\\
\hline
\end{tabular}
\caption{local terms in the case $F=\mathbb Q$ and $n=2$}
\label{table:redei}
\end{table}
\par
To finish the proof, it suffices to express $m$ in terms of the decomposition of $p_3$ in $K_{p_1,p_2}$ and compare the result to the definition (7) of $[p_1,p_2,p_3]$.
Since $\eta = \epsilon \cup \chi_3$ and $\eta_{p_3}=0$, we need to evaluate $\inv_{p_3}((\loc_{p_3}^1 \epsilon) \cup (\loc_{p_3}^1 \circ\chi_3))$.
Put $\psi_3=\loc_{p_3}^1 \epsilon$. 
Then $\psi_3$ is a character of $G_{p_3}^{\ur}$ whose order is at most two.
Moreover, it is trivial if and only if $p_3$ splits completely in $K_{p_1,p_2}$.
Since $\loc_{p_3}^1 \circ \chi_3$ is the Kummer character associated to $p_3$, the local class field theory implies $\inv_{p_3}(\psi_3\cup(\loc_{p_3}^1 \circ\chi_3))=\psi_3(p_3)$. 
So we conclude $m=0$ if $p_3$ splits completely in $K_{p_1,p_2}$, and $m=1$ if $p_3$ has degree two in $K_{p_1,p_2}$, as desired.
\end{proof}
Cor.1 yields the following reciprocity law of R\'{e}dei cohomologically.
\begin{corollary}\label{cor:2}
For a permulation $\sigma$ on $\{1,2,3\}$, we have
$$[p_1,p_2,p_3]=[p_{\sigma(1)},p_{\sigma(2)},p_{\sigma(3)}].$$
\end{corollary}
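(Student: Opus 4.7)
The plan is to deduce this permutation invariance directly from the cohomological description of the symbol established in Theorem~\ref{thm:compare-redei}, combined with the alternating property of Corollary~\ref{cor:alt}. The reciprocity for R\'edei's original symbol is thereby reduced to a feature of the cup product at the level of $2$-torsion.

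First, I would invoke Theorem~\ref{thm:compare-redei} twice: once for the original ordering to obtain $[p_1,p_2,p_3] = [\chi_1,\chi_2,\chi_3]_2$, and once for the permuted ordering to obtain $[p_{\sigma(1)},p_{\sigma(2)},p_{\sigma(3)}] = [\chi_{\sigma(1)},\chi_{\sigma(2)},\chi_{\sigma(3)}]_2$. The hypotheses of Theorem~\ref{thm:compare-redei} (namely $p_i \equiv 1 \pmod 4$ and $\left(\tfrac{p_i}{p_j}\right) = 1$) are manifestly symmetric in the indices, so both identifications are simultaneously valid.

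Next, I would apply Corollary~\ref{cor:alt} with $n=2$, which gives
\[
[\chi_1,\chi_2,\chi_3]_2 = [\chi_{\sigma(1)},\chi_{\sigma(2)},\chi_{\sigma(3)}]_2^{|\sigma|}.
\]
The final observation is that the triple symbol takes values in $\mu_2 = \{\pm 1\}$, and every element of $\{\pm 1\}$ equals its own inverse. Consequently $x^{|\sigma|} = x$ for all $x \in \mu_2$ regardless of whether $|\sigma| = +1$ or $|\sigma| = -1$, so the sign of $\sigma$ disappears and the two permuted R\'edei symbols coincide.

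There is no genuine obstacle here; the corollary is essentially a transcription of Corollary~\ref{cor:alt} into the classical language via Theorem~\ref{thm:compare-redei}, with the simplification that in characteristic two the sign ambiguity in the alternating property collapses. The only point worth emphasizing in the write-up is this last collapse, since it is precisely what makes R\'edei's symbol fully symmetric rather than merely alternating.
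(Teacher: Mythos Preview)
Your proposal is correct and matches the paper's approach exactly: the paper simply remarks that Corollary~\ref{cor:alt} (together with the identification $[p_1,p_2,p_3]=[\chi_1,\chi_2,\chi_3]_2$ from Theorem~\ref{thm:compare-redei}) yields the reciprocity, with the sign $|\sigma|$ disappearing because $n=2$.
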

\noindent

\section{Triple cubic residue symbols in the Eisenstein field}\label{section:eisenstein}
In  this section, let $F=\Q(\zeta_3)$ where $\zeta_3$ is a fixed primitive third root of unity. We recall the cubic triple symbol  introduced in \cite{Amano2018}.
 
Let  $\mathfrak p_1, \mathfrak p_2$ and $\mathfrak p_3$ be distinct maximal ideals of $\mathbb{Z}[\zeta_3]$ such that ${\rm N}\frak{p}_i  \equiv 1$ mod 9 ($i=1,2,3$). Note that ${\rm N}\frak{p}_i \equiv 1$ mod 9 if and only if there is a prime element  $\pi_i \in \mathbb{Z}[\zeta_3]$ such that $\frak{p}_i = (\pi_i), \pi_i \equiv 1$ mod $(3\sqrt{-3})$. In the following, we fix such prime elements $\pi_i$'s and $\left(\frac{\cdot}{\cdot}\right)_3$ denotes the cubic residue symbol in $F$. When
$$
\left(\frac{\pi_i}{\pi_j}\right)_3=1
$$
for all $1 \le i < j \le 3$, the triple cubic residue symbol 
$$
[ \frak{p}_1, \frak{p}_2, \frak{p}_3]_3 \in \mu_3
$$
is defined as follows. 
\begin{definition} Let $\frak{p}_1, \frak{p}_2$ be distinct maximal ideals of $\mathbb{Z}[\zeta_3]$ such that ${\rm N}\frak{p}_1 \equiv {\rm N}\frak{p}_2 \equiv 1$ (mod 9). 
A R\'edei type extension $K$ of $F = \mathbb{Q}(\zeta_3)$ associated to $\frak{p}_1, \frak{p}_2$ is a Galois extension such that
\begin{enumerate}
\item it is unramified outside $\mathfrak p_1$ and $\mathfrak p_2$,
\item its Galois group ${\rm Gal}(K/F)$ is isomophic to $D_3$, and
\item its ramification indices in $ F_{\mathfrak p_1,\mathfrak p_2}$ of $\mathfrak p_1$ and $\mathfrak p_2$ are exactly 3.
\end{enumerate}
\end{definition}
\begin{theorem}[Amano, Mizusawa and Morishita]\label{thm:4} Notations being as in Definition 3, a R\'{e}dei type extension of $F$ assocciated to $\frak{p}_1, \frak{p}_2$ is unique if it exists. 
Let $\frak{p}_1 = (\pi_1), \frak{p}_2 = (\pi_2)$ with $\pi_1 \equiv \pi_2 \equiv 1$ $\mbox{mod}\, (3\sqrt{-3})$. Suppose that $\left( \frac{\pi_1}{\pi_2} \right)_3=1$. 
Then, a R\'edei extension exists if $\pi_1$ and $\pi_2$ are rational primes. 
It is given by the following form
\begin{align}
F_{\frak{p}_1,\frak{p}_2}:= F(\sqrt[3]{\pi_1}, \sqrt[3]{\pi_2}, \sqrt[3]{\theta})
\end{align}
where $\theta$ is an element of $F(\sqrt[3]{\pi_1})$ which is given explicitly. 
\end{theorem}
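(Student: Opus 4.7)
The plan is to mirror the proof of the quadratic R\'edei case (Prop.~\ref{thm:2}), working now over $F=\Q(\zeta_3)$ with coefficients in $\Z/3\Z$. For \emph{uniqueness}, observe that in $D_3$ (the Heisenberg group of order $27$) the commutator subgroup coincides with the center $Z\cong \Z/3\Z$, so the maximal abelian subextension of any R\'edei type extension $K/F$ has Galois group $\overline{D_3}\cong (\Z/3\Z)^2$. The ramification conditions (1) and (3) in Definition 3, together with the normalization $\pi_i\equiv 1\ \mod(3\sqrt{-3})$, pin this subextension down to $F(\sqrt[3]{\pi_1},\sqrt[3]{\pi_2})$ by Kummer theory and an explicit conductor computation. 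Two R\'edei type extensions $K,K'$ of $F$ then differ only by a cubic Kummer twist of $F(\sqrt[3]{\pi_1},\sqrt[3]{\pi_2})$, and the requirement that the twist preserve both the $D_3$-action and the prescribed ramification forces $K=K'$.

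For \emph{existence}, I would apply Lemma~\ref{lemma} to the homomorphism $\varphi=\chi_1\times\chi_2\colon G\to \overline{D_3}$, where $\chi_i$ is the cubic Kummer character attached to $\pi_i$. By the explicit formula for $\delta$ in terms of the cup product, the obstruction to lifting is the class $[\chi_1\cup\chi_2]\in H^2(G,\Z/3\Z)$; by cubic reciprocity over $F$ and the hypothesis $\left(\frac{\pi_1}{\pi_2}\right)_3=1$, this class vanishes locally at every place of $F$ and hence globally. Lemma~\ref{lemma} then produces a lift $\tilde\varphi\colon G\to D_3$, and $K=\overline F{}^{\ker \tilde\varphi}$ is the desired $D_3$-Galois extension of $F$. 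For the explicit form, restricting $\tilde\varphi$ to $\Gal(\overline F/F(\sqrt[3]{\pi_1}))$ yields a cubic character which by Kummer theory corresponds to an element $\theta\in F(\sqrt[3]{\pi_1})^{\times}$, giving $K=F(\sqrt[3]{\pi_1},\sqrt[3]{\pi_2},\sqrt[3]{\theta})$. An explicit $\theta$ is then produced by solving an Eisenstein-field analogue of R\'edei's equation $x^2-p_1y^2=p_2z^2$ inside $\Z[\zeta_3,\sqrt[3]{\pi_1}]$.

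The \emph{main obstacle} is not producing some lift $\tilde\varphi$ but ensuring that one can be chosen with the prescribed ramification: exact index $3$ at each $\frak p_i$ and trivial behavior at the prime above $3$. The set of lifts of $\varphi$ forms a torsor under $H^1(G,Z)\cong H^1(G,\Z/3\Z)$, and the role of the normalization $\pi_i\equiv 1\ \mod(3\sqrt{-3})$ together with the rationality of $\pi_i$ is to provide enough flexibility within this torsor (via appropriately chosen global Kummer elements) to kill any residual local ramification without disturbing the abelian quotient. Verifying these ramification conditions reduces to explicit local class field theory computations at $\frak p_1, \frak p_2$ and the prime above $3$.
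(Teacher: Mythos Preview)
The paper does not prove this theorem at all: its entire proof is the single line ``See Theorem 4.1, Corollary 5.9, Theorem 5.7 and Theorem 5.11 of \cite{Amano2018}.'' The result is imported as a black box from the cited reference, so there is no argument in the paper to compare your proposal against.

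That said, your outline is a reasonable reconstruction of the \emph{strategy} one would expect to find in \cite{Amano2018}, and you have correctly located the genuine difficulty. A few comments on where your sketch would need real work before it became a proof. First, for existence, the vanishing of $[\chi_1\cup\chi_2]$ gives \emph{some} lift $\tilde\varphi$, and you are right that the set of lifts is a torsor under $H^1(G,\Z/3\Z)$; but you do not actually show that this torsor action is rich enough to force ramification index exactly~$3$ at each~$\frak p_i$ while killing everything at $(\sqrt{-3})$. In \cite{Amano2018} this is handled not by an abstract torsor argument but by an explicit construction of $\theta$, and it is precisely here that the hypothesis ``$\pi_1,\pi_2$ are rational primes'' enters---your sketch invokes rationality only vaguely. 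Second, your uniqueness argument needs the auxiliary fact that the cubic characters of $G$ unramified outside $\{\frak p_1,\frak p_2\}$ are exactly the $\chi_1^a\chi_2^b$; this uses the normalization $\pi_i\equiv 1\bmod(3\sqrt{-3})$ together with unit and class-group input for $\Z[\zeta_3]$ that you have not supplied.

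In short: your proposal is a credible plan, but it is a plan rather than a proof, and the paper itself offers nothing beyond a citation.
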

\begin{proof}
See Theorem 4.1, Corollary 5.9, Theorem 5.7 and Theorem 5.11 of \cite{Amano2018}
\end{proof}

Let $F_{\frak{p}_1,\frak{p}_2}$ be the R\'{e}dei type extension given by (8) in Proposition 5. Let $\frak{p}_3 = (\pi_3)$ be a maximal ideal of $\mathbb{Z}[\zeta_3]$ such that it is prime to $\frak{p}_1 \frak{p}_2$ and ${\rm N}\frak{p}_3 \equiv 1$ mod 9, equivalently, $\pi_3 \equiv 1$ (mod $(3\sqrt{-3})$). We assume that $\left(\frac{\pi_1}{\pi_3} \right)_3= \left(\frac{\pi_2}{\pi_3} \right)=1$. Let $\frak{P}_3$ be a place of $F_{\frak{p}_1,\frak{p}_2}$ lying over $\frak{p}_3$.  Since $\frak{P}_3$ is unramified in $F_{\frak{p}_1, \frak{p}_2}/F$, the Artin symbol $\left(\frac{F_{\mathfrak p_1,\mathfrak p_2}/F}{\frak{P}_3}\right) \in {\rm Gal}(F_{\frak{p}_1,\frak{p}_2}/F)$ is defined. Since $\frak{p}_3$ is totally decomposed in $F(\sqrt[3]{\pi_1} , \sqrt[3]{\pi_2})$ by the assumption, $\left(\frac{F_{\mathfrak p_1,\mathfrak p_2}/F}{\frak{P}_3}\right)$ is independent of a choice of $\frak{P}_3$ over $\frak{p}_3$ and so we denote it by $\left(\frac{F_{\mathfrak p_1,\mathfrak p_2}/F}{\frak{p}_3}\right)$.
Then, by Theorem 6.3 of \cite{Amano2018}, the triple cubic residue symbol is given by 
\begin{align}\label{eq:eis}
[\mathfrak p_1,\mathfrak p_2,\mathfrak p_3]_3 =
\left(
	\left(\frac{F_{\mathfrak p_1,\mathfrak p_2}/F}{\mathfrak p_3}\right)
	 \sqrt[3]{\theta}
\right)/\sqrt[3]{\theta}.
\end{align}
For our purpose, we adopt the right hand side of (9) as definition of $[\mathfrak p_1,\mathfrak p_2,\mathfrak p_3]_3$.
\begin{theorem}
Let $\frak{p}_1 = (\pi_1), \frak{p}_2 = (\pi_2)$ and $\frak{p}_3 = (\pi_3)$ be distinct maximal ideals of $\mathbb{Z}[\zeta_3]$ such that ${\rm N}\frak{p}_i  \equiv 1\,\mod 9$ $(i = 1,2,3)$ and that $\left(\frac{\pi_i}{\pi_j}\right)_3=1$ $(1\le i<j\le 3)$.
Denote by $\chi_i : G \rightarrow \mathbb{Z}/3\mathbb{Z}$ the character defined by $\zeta_3^{\chi_i(g)} = g(\sqrt[3]{\pi_i})/\sqrt[3]{\pi_i}$ for $g \in G$. 
Then, $[\chi_1,\chi_2,\chi_3]_3$ is defined and satisfies 
$$[\chi_1,\chi_2,\chi_3]_3 = [\frak{p}_1, \frak{p}_2, \frak{p}_3]_3^{-1}.$$
\end{theorem}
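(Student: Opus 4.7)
The plan is to follow exactly the template of the proof of Theorem~\ref{thm:compare-redei}, replacing the dihedral group $D_2$ of order $8$ by the group $D_3$ of order $27$. First I would verify the hypotheses of Theorem~\ref{thm:1}: the primary condition $N\mathfrak p_i\equiv 1\pmod 9$ (equivalently $\pi_i\equiv 1\pmod{3\sqrt{-3}}$) forces the Kummer extension $F(\sqrt[3]{\pi_i})/F$ to be unramified at the prime above $3$, so $S_i=\{\mathfrak p_i\}$; these are disjoint by hypothesis, each $\chi_i$ is tame, and the local linking conditions $[\loc_v^2(\chi_i\cup\chi_j)]=0$ follow at once from the cubic Hilbert symbol formula combined with the assumption $(\pi_i/\pi_j)_3=1$. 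Thus $[\chi_1,\chi_2,\chi_3]_3$ is defined.

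Next I would build a small global trivialization from the R\'edei type extension $F_{\mathfrak p_1,\mathfrak p_2}$ supplied by Theorem~\ref{thm:4}. Its Galois group $D_3$ gives a surjection $\rho\colon G\to D_3$ lifting $(\chi_1,\chi_2)\colon G\to \overline{D_3}$; by Lemma~\ref{lemma}, the top-right entry of $\rho$ is a cochain $\epsilon\in C^1(G,\mathbb Z/3\mathbb Z)$ with $d\epsilon=\chi_1\cup\chi_2$ and unramified outside $\{\mathfrak p_1,\mathfrak p_2\}$. Setting $\eta:=\epsilon\cup\chi_3$ gives $d\eta=\chi_1\cup\chi_2\cup\chi_3$ and $\eta$ is unramified outside $S:=\{\mathfrak p_1,\mathfrak p_2,\mathfrak p_3,\infty\}$. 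By Proposition~\ref{prop:3} combined with Remark~\ref{remark:2} this smaller $S$ (which does not include the prime above $3$) is permissible, and the unique archimedean place of $F$ is complex and so contributes nothing to $\mathfrak d$.

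I would then reduce the defining sum $\mathfrak d(\chi_1,\chi_2,\chi_3)$ to a single term at $\mathfrak p_3$, exactly in parallel with Table~\ref{table:redei}. By the argument of Proposition~\ref{prop:2}, $\loc_v^3(\chi_1\cup\chi_2\cup\chi_3)$ vanishes as a cochain at each $v\in S$, so $\eta_v:=0$ is a valid local unramified trivialization. At $v=\mathfrak p_i$ for $i=1,2$, the hypothesis $(\pi_3/\pi_i)_3=1$ forces $\chi_3(\mathrm{Frob}_{\mathfrak p_i})=0$; since $\chi_3$ is unramified at $\mathfrak p_i$ this makes $\loc_v^1\chi_3=0$ as a cochain, hence $\loc_v^2\eta=\loc_v^1\epsilon\cup\loc_v^1\chi_3=0$. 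Only $v=\mathfrak p_3$ survives, and there the symmetric conditions $(\pi_1/\pi_3)_3=(\pi_2/\pi_3)_3=1$ give $\loc_{\mathfrak p_3}^1\chi_1=\loc_{\mathfrak p_3}^1\chi_2=0$, whence $\psi_3:=\loc_{\mathfrak p_3}^1\epsilon$ is an unramified character of $G_{\mathfrak p_3}$. This yields
$$
\mathfrak d(\chi_1,\chi_2,\chi_3)=\inv_{\mathfrak p_3}\bigl(\psi_3\cup\loc_{\mathfrak p_3}^1\chi_3\bigr).
$$

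The last step is to identify this single invariant with the right-hand side of (9). Since $\loc_{\mathfrak p_3}^1\chi_3$ is the Kummer character of the uniformizer $\pi_3$, local class field theory evaluates the invariant as $\psi_3(\mathrm{rec}_{\mathfrak p_3}(\pi_3))$, i.e.\ as a power of $\zeta_3$ which reads off the top-right entry of $\rho(\mathrm{Frob}_{\mathfrak p_3})\in D_3$. Under the identification of the center $Z\subset D_3$ with $\Gal(F_{\mathfrak p_1,\mathfrak p_2}/F(\sqrt[3]{\pi_1},\sqrt[3]{\pi_2}))$ acting on $\sqrt[3]{\theta}$ by multiplication by $\zeta_3$, this top-right entry encodes exactly how the Artin symbol $\left(\frac{F_{\mathfrak p_1,\mathfrak p_2}/F}{\mathfrak p_3}\right)$ acts on $\sqrt[3]{\theta}$, which is the right-hand side of (9). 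The inversion in $[\chi_1,\chi_2,\chi_3]_3=[\mathfrak p_1,\mathfrak p_2,\mathfrak p_3]_3^{-1}$ arises from the convention that local reciprocity sends a uniformizer to the geometric (rather than the arithmetic) Frobenius. The most delicate part of the argument will be precisely this final sign bookkeeping: matching the matrix model for $\rho$ with the Artin action on $\sqrt[3]{\theta}$ carefully enough to produce the $-1$ exponent rather than $+1$.
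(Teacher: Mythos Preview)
Your proposal is correct and follows essentially the same route as the paper: build $\epsilon$ from the R\'edei type extension $F_{\mathfrak p_1,\mathfrak p_2}$, set $\eta=\epsilon\cup\chi_3$, use Remark~\ref{remark:2} to work with $S=\{\mathfrak p_1,\mathfrak p_2,\mathfrak p_3\}$ (the complex place contributes nothing), tabulate the local terms to isolate the contribution at $\mathfrak p_3$, and identify that contribution with the Artin symbol acting on $\sqrt[3]{\theta}$. The only cosmetic difference is that the paper handles the final sign by directly invoking the formula $\bigl(\sigma\sqrt[3]{\theta}\bigr)/\sqrt[3]{\theta}=\zeta^{-\inv_{\mathfrak p_3'}(\psi_\theta\cup\psi_{\pi_3})}$ from \cite[Thm.\,8.12]{Koch2002}, whereas you trace the minus sign to the geometric-versus-arithmetic Frobenius convention; these are the same bookkeeping.
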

\begin{proof} Let $L := F(\sqrt[3]{\pi_1},\sqrt[3]{\pi_2})$ and let $\mathfrak p_3'$ is a place of $L$ lying below $\mathfrak P_3$ and let $\psi_a : {\rm Gal}((F_{\frak{p}_1, \frak{p}_2})_{\frak{P}_3}/ L_{\mathfrak p_3'}) \rightarrow \mathbb{Z}/3\mathbb{Z}$ be the Kummer character associated to $a \in L_{\mathfrak p_3'}^{\times}$, namely, $\zeta_3^{\psi_a(g)} = g(\sqrt[3]{a})/\sqrt[3]{a}$ for $g \in {\rm Gal}((F_{\frak{p}_1, \frak{p}_2})_{\frak{P}_3}/L_{\mathfrak p_3'})$. Let ${\rm inv}_{\frak{p}_3'} : H^2({\rm Gal}((F_{\frak{p}_1, \frak{p}_2})_{\frak{P}_3}/L_{\mathfrak p_3'}), \mathbb{Z}/3\mathbb{Z}) \rightarrow \mathbb{Z}/3\mathbb{Z}$
 be the invariant map of local class field theory given under the identification $\mathbb{Z}/3\mathbb{Z} \simeq \mu_3$ by $1 \mapsto \zeta$. By the known relation between the norm residue symbol and the cup product (Theorem 8.12 of \cite{Koch2002}), we have 
\begin{align}\label{eq:13}
\left(
	\left(\frac{F_{\mathfrak p_1,\mathfrak p_2}/F}{\mathfrak p_3}\right)
	 \sqrt[3]{\theta}
\right)/\sqrt[3]{\theta}
= \zeta^{-\operatorname{inv}_{\mathfrak p_3'} \left(\psi_{\theta} \cup \psi_{\pi_3}\right)}.
\end{align}
Note that $\mathfrak p'_3$ is degree one prime unramified over $\mathfrak p_3$ so $\left(F_{\mathfrak p_1,\mathfrak p_2}\right)_{\mathfrak p_3'} \simeq F_{\mathfrak p_3}$. 
From this we have
$$
 \operatorname{inv}_{\mathfrak p_3'} \left(\psi_{\theta} \cup \psi_{\pi_3}\right)
=
 \operatorname{inv}_{\mathfrak p_3} \left(\psi_{\theta} \cup \psi_{\pi_3}\right).
$$
We proceed to compare \eqref{eq:eis} with ours.
The argument is similar to that for the R\'edei case.
The local computation for $[\chi_1,\chi_2,\chi_3]_3$ is given in Table\,\ref{table:amm}. Here the extension $F_{\mathfrak p_1,\mathfrak p_2}$ gives rise to $\epsilon \in C^2(G,\Z/3\Z)$ such that $\epsilon$ is unramified outside $\{\mathfrak p_1, \mathfrak p_2\}$ and that $d(\epsilon\cup\chi_3) = \chi_1 \cup \chi_2\cup\chi_3$.
Since there are no real places, we take $S=\{\mathfrak p_1,\mathfrak p_2, \mathfrak p_3\}$.
Since $\eta=\epsilon \cup \chi_3$ is unramified at $\sqrt{-3}$, there are no contributions at $\sqrt{-3}$, in view of Remark\,\ref{remark:2}.  
\\
\begin{table}[htb]
\def\arraystretch{1.2}
\begin{tabular}{|c||c|c|c|c|c|}
\hline
$v$		&	$\mathfrak p_1$	or  	$\mathfrak p_2$ 	& 	$\mathfrak p_3$	
\\
\hline 
$\operatorname{loc}_v^2\left(\eta^{\operatorname{glob}}\right)$
				& ramified 	&unramified 	
\\
\hline
$\chi_3$ 			&$0$					&ramified	
\\
\hline
$\eta_v$ 		&	$0$ 					& 0	
\\
\hline
$\operatorname{loc}_v\left(\epsilon\cup \chi_3\right) - \eta_v$
				&	$0$					&	$\psi_{\theta}\cup\psi_{\pi_3}$		
\\
\hline
\end{tabular}
\caption{local terms in the case $F=\mathbb Q(\zeta_3)$}
\label{table:amm}
\end{table}
\par
Then, except for the term at $\mathfrak p_3$, where we have $\inv_{\mathfrak p_3}(\psi_{\theta}\cup\psi_{\pi_3})$,
the local terms vanish.
In view of \eqref{eq:13}, we conclude that $[\mathfrak p_1,\mathfrak p_2,\mathfrak p_3]_3^{-1} = [\chi_1,\chi_2,\chi_3]_{3}$.
\end{proof}

\begin{corollary}
For a permulation $\sigma$ on $\{1,2,3\}$ with signe $|\sigma|$, we have
$$
[\mathfrak p_1,\mathfrak p_2, \mathfrak p_3]=
[\mathfrak p_{\sigma(1)},\mathfrak p_{\sigma(2)}, \mathfrak p_{\sigma(3)}]^{|\sigma|}.
$$
\end{corollary}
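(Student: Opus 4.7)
The plan is to bootstrap the alternating property of the cohomologically defined triple symbol, namely Corollary~\ref{cor:alt} specialized to $n=3$, through the identification established in the preceding theorem, which pairs $[\chi_1,\chi_2,\chi_3]_3$ with the inverse of the classical cubic triple symbol $[\mathfrak{p}_1,\mathfrak{p}_2,\mathfrak{p}_3]_3$. The argument is purely formal once that comparison is in hand.

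First I would verify that the hypotheses needed to define the classical cubic symbol, namely $N\mathfrak{p}_i \equiv 1 \pmod 9$ for each $i$ and $(\pi_i/\pi_j)_3 = 1$ for $i \neq j$, are symmetric in the three primes. Hence $[\mathfrak{p}_{\sigma(1)},\mathfrak{p}_{\sigma(2)},\mathfrak{p}_{\sigma(3)}]_3$ is defined whenever $[\mathfrak{p}_1,\mathfrak{p}_2,\mathfrak{p}_3]_3$ is, and the corresponding Kummer characters are merely relabelled by $\sigma$, so the hypotheses of Theorem~\ref{thm:1} continue to hold for the permuted triple of characters as well.

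Next I would apply the preceding comparison theorem simultaneously to the original and to the permuted triple, obtaining
$$
[\chi_1,\chi_2,\chi_3]_3 = [\mathfrak{p}_1,\mathfrak{p}_2,\mathfrak{p}_3]_3^{-1}
\quad\text{and}\quad
[\chi_{\sigma(1)},\chi_{\sigma(2)},\chi_{\sigma(3)}]_3 = [\mathfrak{p}_{\sigma(1)},\mathfrak{p}_{\sigma(2)},\mathfrak{p}_{\sigma(3)}]_3^{-1}.
$$
Combining these two identities with Corollary~\ref{cor:alt}, which supplies $[\chi_{\sigma(1)},\chi_{\sigma(2)},\chi_{\sigma(3)}]_3 = [\chi_1,\chi_2,\chi_3]_3^{|\sigma|}$, yields
$$
[\mathfrak{p}_{\sigma(1)},\mathfrak{p}_{\sigma(2)},\mathfrak{p}_{\sigma(3)}]_3^{-1} = [\mathfrak{p}_1,\mathfrak{p}_2,\mathfrak{p}_3]_3^{-|\sigma|},
$$
and raising both sides to the $(-1)$-th power (and using $|\sigma|^2 = 1$) delivers the asserted identity.

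I do not anticipate any real obstacle here: the statement is a purely formal consequence of the cubic comparison theorem combined with the alternating property of the cohomological symbol, both established above. The only mild bookkeeping point is the double inversion in the last step, which is harmless because $|\sigma| \in \{\pm 1\}$ makes the operation of raising to the $|\sigma|$-th power self-inverse.
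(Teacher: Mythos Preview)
Your proposal is correct and follows exactly the approach of the paper, which simply records that the corollary is a consequence of the comparison theorem $[\chi_1,\chi_2,\chi_3]_3 = [\mathfrak{p}_1,\mathfrak{p}_2,\mathfrak{p}_3]_3^{-1}$ together with Corollary~\ref{cor:alt}. You have merely spelled out the bookkeeping (symmetry of hypotheses, the two inversions) that the paper leaves implicit.
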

\begin{proof}
It is a consequence of our comparison and Cor.\,\ref{cor:alt}.
\end{proof}

\section{Comparison to the triple Massey products}\label{section:massey}

In this section, we show that our symbols $[\chi_1, \chi_2, \chi_3]_n$ are expressed in terms of the Massey products of the characters $\chi_1,\chi_2,\chi_3$
in the cases discussed in the sections 4 and 5.  We keep the same notations as in the previous sections.

Let $\chi_1, \chi_2$ and $\chi_3$ be the characters $G = {\rm Gal}(\overline{F}/F) \rightarrow \mathbb{Z}/n\mathbb{Z}$ in the sections 3 and 4. 
   Namely, when $F = \mathbb{Q}$ and $n=2$, they are the Kummer characters associated to distinct prime numbers $p_1, p_2$ and $p_3$ satisfying the conditions $p_i \equiv 1$ mod 4 ($i = 1,2,3$) and $\left( \frac{p_i}{p_j} \right) = 1$ ($1 \leq i < j \leq 3$), which are $(-1)^{\chi_i(g)} = g(\sqrt{p_i})/\sqrt{p_i}$ ($i=1,2,3$). When  when $F = \mathbb{Q}(\zeta_3)$ and $n=3$, they are the Kummer characters associated to distinct maximal ideals $\frak{p}_1 = (\pi_1), \frak{p}_2 = (\pi_2)$ and $\frak{p}_3 = (\pi_3)$ satisfying the conditions ${\rm N}\frak{p}_i \equiv 1$ mod 9 ($i = 1,2,3$) and $\left( \frac{\pi_i}{\pi_j} \right)_3 = 1$ ($1 \leq i < j \leq 3$), which are defined by  $\zeta_3^{\chi_i(g)} = g(\sqrt[3]{\pi_i})/\sqrt[3]{\pi_j}$ ($i=1,2,3$). 
   
As we see in the sections 3 and 4, we have
$$ {\rm loc}_v^2(\chi_i \cup \chi_j) = 0 \;\;\;\; (i\neq j) $$
for any place of $F$. By the global reciprocity, 
$$\bigoplus_v {\rm loc}_v^2 : H^2(G, \mathbb{Z}/n\mathbb{Z}) \rightarrow \bigoplus_v H^2(G_v, \mathbb{Z}/n\mathbb{Z}),$$
where $v$ runs over all places of $F$, is injective and hence
$$ \chi_i \cup \chi_j = 0 \;\;\;\; (i \neq j)$$
in $H^2(G, \mathbb{Z}/n\mathbb{Z})$. So there are $c_{12}, c_{23} \in C^1(G, \mathbb{Z}/n\mathbb{Z})$ such that
$$ \chi_1 \cup \chi_{23} = d c_{12}, \;\; \chi_2 \cup \chi_3 = d \chi_{23}.$$
The Massey product $\langle \chi_1, \chi_2, \chi_3 \rangle$ of $\chi_1, \chi_2, \chi_3$ is then defined by
$$ \langle  \chi_1, \chi_2, \chi_3 \rangle := [ c_{12} \cup \chi_3 + \chi_1 \cup c_{23}],$$
which is independent of choices of $c_{12}$ and $c_{23}$.                                              
\begin{theorem} Let $v$ denote $p_3$ $($resp. $\frak{p}_3)$ for the case that $n= 2, F =\mathbb{Q}$ $($resp. $n=3, F=\mathbb{Q}(\zeta_3))$, and set 
$\langle \chi_1, \chi_2, \chi_3 \rangle_v := {\rm inv}_{v}({\rm loc}_{v}^2(\langle \chi_1,\chi_2,\chi_3 \rangle))$ for simplicity.
Then we have
$$ \frak{d}(\chi_1,\chi_2,\chi_3) = \langle \chi_1, \chi_2, \chi_3 \rangle_v.$$
Consequently, we have
$$[p_1, p_2, p_3] = (-1)^{\langle \chi_1, \chi_2, \chi_3 \rangle_{p_3}}, \;\; [\frak{p}_1, \frak{p}_2, \frak{p}_3] = \zeta_3^{-\langle \chi_1, \chi_2, \chi_3 \rangle_{\frak{p}_3}}.$$
\end{theorem}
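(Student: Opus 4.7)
The plan is to produce the Massey product from the same cochain data used in Sections~\ref{section:redei} and \ref{section:eisenstein}, and then to argue that the only surviving local contribution is at $v$.

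First I would take the $1$-cochain $c_{12}$ in the definition of $\langle\chi_1,\chi_2,\chi_3\rangle$ to be the same $\epsilon\in C^1(G,\Z/n\Z)$ that appears in the proof of Thm.\,\ref{thm:compare-redei} and in the proof of the cubic comparison in Section~\ref{section:eisenstein}, namely the lift of $\chi_1\times\chi_2\colon G\to\overline D_n$ along $D_n\twoheadrightarrow\overline D_n$ provided by Lemma~\ref{lemma} together with the R\'edei-type extension, so that $d\epsilon=\chi_1\cup\chi_2$. With this choice the global trivialization used in those sections is exactly $\eta=\epsilon\cup\chi_3$, and Tables~\ref{table:redei} and \ref{table:amm} record the vanishing of every local term in $\mathfrak d$ except at $v$, where one may take $\eta_v=0$. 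I therefore expect to obtain
$$
\mathfrak d(\chi_1,\chi_2,\chi_3)=\inv_v\bigl(\loc_v^2(\epsilon\cup\chi_3)\bigr).
$$

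Next, for any $c_{23}\in C^1(G,\Z/n\Z)$ with $dc_{23}=\chi_2\cup\chi_3$, the cocycle $\epsilon\cup\chi_3+\chi_1\cup c_{23}$ represents $\langle\chi_1,\chi_2,\chi_3\rangle$. I would restrict to $G_v$ and notice that the second summand becomes $\loc_v^1\chi_1\cup\loc_v^1 c_{23}$. The hypothesis $\bigl(\tfrac{p_1}{p_3}\bigr)=1$ (resp.\ $\bigl(\tfrac{\pi_1}{\pi_3}\bigr)_3=1$) says exactly that $p_1$ (resp.\ $\pi_1$) is an $n$-th power in $F_v^\times$, which forces $\loc_v^1\chi_1$ to vanish identically on $G_v$. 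Hence $\loc_v^2(\chi_1\cup c_{23})=0$ already at the cochain level, and one obtains
$$
\langle\chi_1,\chi_2,\chi_3\rangle_v=\inv_v\bigl(\loc_v^2(\epsilon\cup\chi_3)\bigr)=\mathfrak d(\chi_1,\chi_2,\chi_3).
$$
The two displayed consequences at the end are then immediate from $[\chi_1,\chi_2,\chi_3]_n=\zeta^{\mathfrak d}$ combined with Thm.\,\ref{thm:compare-redei} and the cubic comparison in Section~\ref{section:eisenstein}, the latter of which accounts for the negative sign in the cubic exponent.

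The main obstacle I anticipate is the indeterminacy of the Massey product. Replacing $c_{12}$ by $c_{12}+a$ for a $1$-cocycle $a$ shifts the representative by $a\cup\chi_3$, whose localization at $v$ need not vanish because $\chi_3$ is ramified there. So $\langle\chi_1,\chi_2,\chi_3\rangle_v$ genuinely depends on the choice of representative, and the proof has to commit to the specific $c_{12}=\epsilon$ dictated by the R\'edei-type construction. Once $c_{12}$ is pinned down, the remaining indeterminacy is confined to $\chi_1\cup H^1(G,\Z/n\Z)$, and this is precisely the part killed by $\loc_v^1\chi_1=0$, which is what makes the local invariant well-defined on the Massey side.
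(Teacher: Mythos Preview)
Your argument is correct and follows the paper's own proof essentially line by line: choose $c_{12}=\epsilon$, use $\loc_v^1\chi_1=0$ to kill the $\chi_1\cup c_{23}$ term at $v$, and read off $\mathfrak d$ from the tables in Sections~\ref{section:redei} and~\ref{section:eisenstein}. Your closing paragraph on the Massey indeterminacy is a genuine point that the paper leaves implicit (it asserts independence of $c_{12},c_{23}$ without qualification), and your resolution---fix $c_{12}=\epsilon$, then the residual ambiguity lies in $\chi_1\cup H^1$, which dies under $\loc_v$---is the right way to make the statement precise.
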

\begin{proof} Since ${\rm loc}_{v}^1(\chi_1) = 0$ by the assumptions, we have
$$ {\rm loc}_{v}^2(c_{12} \cup \chi_3 + \chi_1 \cup c_{23}) = {\rm loc}_{v}^1(c_{12} \cup \chi_3).$$
Observing that $c_{12}$ can be taken to be the cochain $\epsilon$ in the proofs of Theorem 3 and Theorem 4, we obtain the first assertion. The second assertions follow from Theorem 2 and Theorem 3.
\end{proof}
\begin{remark}\label{remark:3}
In [2] and  [8], the R\'{e}dei symbol and the triple cubic residue symbol were expressed by the Massey products, in a group-theoretic manner, using a (link group like) presentation of the maximal pro-$\ell$ quotient of $G_S$ and the transgression map, where $\ell = 2$ and $S = \{ p_1, p_2, p_3, \infty \}$ for the case of the R\'{e}dei symbol or $l=3$ and $S = \{ \frak{p}_1, \frak{p}_2, \frak{p}_3, \frak{p}_0 \}$  for the case of the triple cubic residue symbol, where $\frak{p}_0$ is a maximal ideal of $\mathbb{Z}[\zeta_3]$ such that ${\rm N}\frak{p}_0 \equiv 4$ or 7 mod 9. Theorem 6 gives a different cohomological  approach to the Massey product expression of the triple symbols.
\end{remark}
\bibliographystyle{plain}
\bibliography{ref}

\begin{thebibliography}{10}

\bibitem{Amano2014}
Fumiya Amano.
\newblock {On Rédei’s dihedral extension and triple reciprocity law}.
\newblock {\em Proceedings of the Japan Academy, Series A, Mathematical
  Sciences}, 90(1):1 -- 5, 2014.

\bibitem{Amano2018}
Fumiya Amano, Yasushi Mizusawa, and Masanori Morishita.
\newblock On mod 3 triple {M}ilnor invariants and triple cubic residue symbols
  in the {E}isenstein number field.
\newblock {\em Res. Number Theory}, 4(1):Paper No. 7, 29, 2018.

\bibitem{Chung2020}
Hee-Joong Chung, Dohyeong Kim, Minhyong Kim, Jeehoon Park, and Hwajong Yoo.
\newblock Arithmetic {C}hern-{S}imons theory {II}.
\newblock In {\em {$p$}-adic hodge theory}, Simons Symp., pages 81--128.
  Springer, Cham, [2020] \copyright 2020.

\bibitem{Froehlich1960}
A.~Fr\"{o}hlich.
\newblock A prime decomposition symbol for certain non {A}belian number fields.
\newblock {\em Acta Sci. Math. (Szeged)}, 21:229--246, 1960.

\bibitem{Furuta1980}
Yoshiomi Furuta.
\newblock A prime decomposition symbol for a nonabelian central extension which
  is abelian over a bicyclic biquadratic field.
\newblock {\em Nagoya Math. J.}, 79:79--109, 1980.

\bibitem{Koch2002}
Helmut Koch.
\newblock {\em Galois theory of {$p$}-extensions}.
\newblock Springer Monographs in Mathematics. Springer-Verlag, Berlin, 2002.
\newblock With a foreword by I. R. Shafarevich, Translated from the 1970 German
  original by Franz Lemmermeyer, With a postscript by the author and
  Lemmermeyer.

\bibitem{Milnor1957}
John Milnor.
\newblock Isotopy of links.
\newblock In {\em Algebraic geometry and topology. {A} symposium in honor of
  {S}. {L}efschetz}, pages 280--306. Princeton Univ. Press, Princeton, NJ,
  1957.

\bibitem{Morishita2004}
Masanori Morishita.
\newblock Milnor invariants and {M}assey products for prime numbers.
\newblock {\em Compos. Math.}, 140(1):69--83, 2004.

\bibitem{Morishita2024}
Masanori Morishita.
\newblock {\em Knots and primes{:} An Introduction to Arithmetic Topology}.
\newblock Universitext. Springer, London, 2nd edition, 2024.
\newblock An introduction to arithmetic topology.

\bibitem{Neukirch2008}
J\"{u}rgen Neukirch, Alexander Schmidt, and Kay Wingberg.
\newblock {\em Cohomology of number fields}, volume 323 of {\em Grundlehren der
  mathematischen Wissenschaften [Fundamental Principles of Mathematical
  Sciences]}.
\newblock Springer-Verlag, Berlin, second edition, 2008.

\bibitem{Redei1939}
Ladislaus R\'{e}dei.
\newblock Ein neues zahlentheoretisches {S}ymbol mit {A}nwendungen auf die
  {T}heorie der quadratischen {Z}ahlk\"{o}rper. {I}.
\newblock {\em J. Reine Angew. Math.}, 180:1--43, 1939.

\bibitem{Serre1994}
Jean-Pierre Serre.
\newblock {\em Cohomologie {G}aloisienne}, volume~5 of {\em Lecture Notes in
  Mathematics}.
\newblock Springer-Verlag, Berlin, fifth edition, 1994.

\bibitem{Shiraishi2021}
Densuke Shiraishi.
\newblock On {$\ell$}-adic {G}alois polylogarithms and triple {$\ell$}th power
  residue symbols.
\newblock {\em Kyushu J. Math.}, 75(1):95--113, 2021.

\bibitem{Suzuki1988}
Hiroshi Suzuki.
\newblock A tripling symbol for central extensions of algebraic number fields.
\newblock In {\em Investigations in number theory}, volume~13 of {\em Adv.
  Stud. Pure Math.}, pages 9--24. Academic Press, Boston, MA, 1988.

\bibitem{Turaev1976}
V.~G. Turaev.
\newblock The {M}ilnor invariants and {M}assey products.
\newblock {\em Zap. Nau\v{c}n. Sem. Leningrad. Otdel. Mat. Inst. Steklov.
  (LOMI)}, 66:189--203, 209--210, 1976.
\newblock Studies in topology, II.

\end{thebibliography}

\end{document}